\theoremstyle{plain}
\newtheorem{teo}{Theorem}[section]
\newtheorem{cor}[teo]{Corollary}
\newtheorem{lem}[teo]{Lemma}
\newtheorem{prop}[teo]{Proposition}
\theoremstyle{definition}
\newtheorem{df}[teo]{Definition}
\newtheorem{rmk}[teo]{Remark}
\DeclareMathOperator{\length}{len}
\DeclareMathOperator{\lip}{\mathcal{L}}
\DeclareMathOperator{\lips}{lips}
\DeclareMathOperator{\loglips}{loglips}
\DeclareMathOperator{\Diff}{Diff}
\DeclareMathOperator{\dist}{dist} 
\DeclareMathOperator{\distF}{d}
\newcommand{\id}{id}
\newcommand{\fix}{\hbox{Sing}}
\newcommand{\sing}{\fix}
\newcommand{\R}    {\mathbb R}
\newcommand{\C}{\mathcal C}
\newcommand{\Z}  {\mathbb Z}
\newcommand{\N}  {\mathbb N}
\renewcommand{\epsilon}{\varepsilon}
\begin{document}
\title{Lipschitz perturbations of expansive systems}

\author[A. Artigue]{Alfonso Artigue}

\subjclass{Primary: 54H20; Secondary: 37C20.}
\keywords{expansive homeomorphisms, Lipschitz condition, structural stability, shadowing property, Anosov diffeomorphisms}
\email{artigue@unorte.edu.uy}
\address{Departamento de Matemática y Estadística del Litoral, Universidad de la Rep\'ublica, 
 Gral. Rivera 1350, Salto, Uruguay}

\maketitle
% \bigskip
% \centerline{(Communicated by )}
\begin{abstract}
We extend some known results from smooth dynamical systems to 
the category of Lipschitz homeomorphisms of compact metric spaces. 
We consider dynamical properties as robust expansiveness and structural stability allowing Lipschitz perturbations 
with respect to a hyperbolic metric.
\textcolor{black}{We also study the relationship between Lipschitz topologies and the $C^1$ topology on smooth manifolds.}
\end{abstract}
% \maketitle

\section{Introduction}
In the study of dynamical systems with discrete time
we can distinguish
two important categories: the \emph{smooth} as for example
$C^1$ diffeomorphisms of smooth manifolds 
and the \emph{topological} as homeomorphisms of metric spaces. 
Here we will consider \emph{Lipschitz} homeomorphisms, an intermediate one. 
Hyperbolicity is a key property of smooth systems.
In this article we will \textcolor{black}{consider Fathi's hyperbolic metric defined for an 
arbitrary expansive homeomorphism of a compact metric space to study its Lipschitz perturbations. }

Recall that a homeomorphism $f\colon X\to X$ of a compact metric space 
is \emph{expansive} if 
there is $\delta>0$ such that $\dist(f^n(x), f^n(y))\leq\delta$ for all $n\in\Z$ implies $x=y$. 
In \cite{Fa89} Fathi proved that for every expansive homeomorphism there are a metric $\distF$, defining the original topology, 
and two parameters $\lambda>1$ and $\epsilon>0$ such that 
if $\distF(x,y)<\epsilon$ then $\distF(f(x),f(y))\geq \lambda\distF(x,y)$ or 
$\distF(f^{-1}(x),f^{-1}(y))\geq \lambda\distF(x,y)$.
\textcolor{black}{Fathi proposed to called such metric as an \emph{adapted hyperbolic metric}.}
The hyperbolic metric is our starting point in the extension of results from hyperbolic diffeomorphisms to 
expansive homeomorphisms in the Lipschitz category. 

A special type of diffeomorphisms with a hyperbolic behavior are the quasi-Anosov diffeomorphisms 
defined by Mañ\'e.
A diffeomorphisms $f\colon M\to M$ of a compact smooth manifold is \emph{quasi-Anosov} if its tangent map is expansive. 
Mañ\'e in \cite{Ma75} proved that the interior in the $C^1$ topology of the set of expansive diffeomorphisms is the set of 
quasi-Anosov diffeomorphisms. 
In particular, every $C^1$ small perturbation of a quasi-Anosov diffeomorphism is expansive, i.e. they are $C^1$ \emph{robustly expansive}.
In Proposition \ref{QALipsSS} we will show that quasi-Anosov diffeomorphisms are robustly expansive even allowing Lipschitz perturbations.

Let us explain how we arrived to the Lipschitz category.
The key is Theorem 6 in Walters paper \cite{Wa78}. 
There it is proved some kind of \emph{structural stability} of transitive subshifts of finite type. 
We will show that Walters perturbations are in fact close in a Lipschitz topology, see Proposition \ref{propEquivMetrics}. 
In particular he showed that these subshifts are robustly expansive. 
In this proof he uses the hyperbolicity of the usual distance for the shift map. 
{Using similar ideas, in} Corollary \ref{expQA} we conclude that every expansive homeomorphism of a compact metric space is 
Lipschitz robustly expansive with respect to a hyperbolic metric.
Another consequence of our results is that Anosov diffeomorphisms are structurally stable even allowing Lipschitz 
perturbations, see Corollary \ref{corAnosov}.
That is, a homeomorphism obtained after a small Lipschitz perturbation is conjugate to the initial Anosov diffeomorphism.
\textcolor{black}{This result extends the one obtained in \cite{Ta74} as explained in Remark \ref{rmkTakaki}.}

Structural stability is related with the shadowing or 
pseudo-orbit tracing property. See for example \cite{Wa78}.
\textcolor{black}{In \cite{PiTi} it is shown that a strong version of the shadowing property (called Lipschitz shadowing) is in fact equivalent with structural stability 
of $C^1$ diffeomorphisms.}
Expansive homeomorphisms of surfaces are known to be conjugate to pseudo-Anosov diffeomorphisms \cite{Le89,Hi90} and 
if the surface is not a torus then pseudo-Anosov maps have not the shadowing property.
But, as proved \textcolor{black}{by Lewowicz} in \cite{Le83}, they are persistent 
and, in some sense, $C^1$ structurally stable. 
See \cite{Le83} for details and precise definitions.
To prove the structural stability 
Lewowicz assumed that the perturbed homeomorphism coincides 
with the original one at the singular points of the stable foliation. See \cite[Theorem 3.5]{Le83}.
We will show that pseudo-Anosov diffeomorphisms are Lipschitz-structurally 
stable with respect to a hyperbolic metric, see Corollary \ref{corPseudoAnosov}. 
In our setting we can also prove that singular points cannot be moved with a small Lipschitz perturbation, i.e., Lewowicz's hypothesis 
holds.
\textcolor{black}{This is done in Theorem \ref{teoSingPA}.}

Let us mention a very special property of the Lipschitz category that is not shared with diffeomorphisms. 
It is known that there are homeomorphisms of smooth manifolds that are not conjugate to smooth diffeomorphisms. 
An example can be obtained from the homeomorphism of $\R^2$ given in polar coordinates by 
$$f(r,\theta)=(r,\theta+\sin(1/r)).$$ See \cite{Ha79} and references therein for more on this subject. 
In the $C^2$ category a well known example is the Denjoy $C^1$ {circle} diffeomorphism (with wandering points) 
that is not conjugate to a $C^2$ diffeomorphism. 
Consequently, from the point of view of topological dynamics, 
there is some loss of generality in assuming that the dynamic is generated by a $C^r$ diffeomorphism.
But, every homeomorphisms, 
even \textcolor{black}{if it is} defined  on a compact metric space,
is conjugate with a bi-Lipschitz homeomorphism. 
Let us prove it. Given a homeomorphism $f\colon X\to X$ of a compact metric space $(X,\dist)$, 
consider a new distance 
$$\distF (x,y)=\sum_{n\in\Z} \frac{\dist(f^n(x),f^n(y))}{2^{|n|}}.$$ 
Now it is easy to see that it defines the original topology and that 
$f$ and $f^{-1}$ are Lipschitz with respect to $\distF$. 
\textcolor{black}{The conjugating homeomorphism is $id:(X,\dist)\to (X,\distF)$.}
See \cite{SaWo89} for more on this subject.
Therefore, from the viewpoint of topological dynamics, 
there is no loss 
of generality
in assuming that the homeomorphism is in fact bi-Lipschitz. 
We wish to present the following question: is every expansive homeomorphism of a compact smooth manifold conjugate to a $C^1$ diffeomorphism?
This seems to be the case in all known examples.

\textcolor{black}{This} paper is organized as follows. 
In Section \ref{secHypMet} we consider hyperbolic metrics for expansive homeomorphisms and we prove its robust 
expansiveness allowing perturbations in the topology considered by Walters in \cite{Wa78}.
In Section \ref{secLips} we introduce the Lipschitz topology in the space of bi-Lipschitz homeomorphisms 
and we prove that it is equivalent with Walters topology. 
\textcolor{black}{We conclude the Lipschitz robust expansiveness of every expansive homeomorphism with respect to a hyperbolic metric. 
In particular, 
quasi-Anosov diffeomorphisms are robustly expansive even allowing Lipschitz perturbations.}
In Section \ref{secSS} we apply \textcolor{black}{techniques} from \cite{Wa78} to obtain the Lipschitz structural stability 
(with respect to a hyperbolic metric) of 
expansive homeomorphisms with the shadowing property. 
With  \textcolor{black}{similar techniques from} \cite{Le83} we obtain 
the same conclusion assuming that the homeomorphism is persistent instead of 
having the shadowing property.
Applications to Anosov and pseudo-Anosov diffeomorphisms are given. 
For pseudo-Anosov homeomorphisms we consider a flat Riemannian metric with conical 
singularities as in, for example, \cite{MaSm93,Je96}.
Using this metric 
we show that singular points of pseudo-Anosov homeomorphisms 
cannot be moved with a small Lipschitz perturbation.
\textcolor{black}{In Section \ref{secC1} we consider smooth manifolds and 
we compare Lipschitz topologies with the usual $C^1$ topology.}

% \textcolor{black}{\emph{Acknowledgements}. I thank J. L. Vieitez for many corrections and suggestions and the referee whose 
% comments allowed me to improve the paper. I also thank M. Achigar, D. Ferraro and I. Monteverde for useful conversations concerning the 
% contents of Section \ref{secC1}.}

\section{Hyperbolic metrics}
\label{secHypMet}
Let $f\colon X\to X$ be a homeomorphism of a compact metrizable topological space.

\begin{df}
A metric $\dist$ in $X$ defining its topology is $f$-\emph{hyperbolic} if 
there are $\delta>0$ and $\lambda>1$ such that 
if $\dist(x,y)<\delta$ then $$\max\{\dist(f(x),f(y)),\dist(f^{-1}(x),f^{-1}(y))\}\geq \lambda\dist(x,y).$$ 
In this case $\delta$ is an \emph{expansive constant} and $\lambda$ is an \emph{expanding factor} 
and we also say that $f$ is \emph{hyperbolic} with respect to $\dist$.
\end{df}

Our first result, Theorem \ref{teoWalters}, 
is a \textcolor{black}{generalization of \cite[Theorem 6]{Wa78}. }
There it is proved some kind of robust expansiveness of the shift map.
\textcolor{black}{We will consider the topology introduced in \cite{Wa78}.}
% a topology in the space of bi-Lipschitz homeomorphisms. 
Recall that a continuous function $f\colon X\to X$ is \emph{Lipschitz} if there is $k>0$ such that 
$\dist(f(x),f(y))\leq k\dist(x,y)$ for all $x,y\in X$ and a homeomorphisms $f$ is \emph{bi-Lipschitz} if $f$ and $f^{-1}$ are Lipschitz.

\begin{rmk}
\label{fathiMetric}
 In \textcolor{black}{\cite[Theorem 5.1]{Fa89}} Fathi proved that every expansive homeomorphism $f$ defined on a compact metric 
 space admits an $f$-hyperbolic metric defining the \textcolor{black}{original} topology of the space. 
 \textcolor{black}{Moreover, $f$ is a bi-Lipschitz homeomorphism with respect to this hyperbolic metric}.
\end{rmk}

\begin{df}
Given two homeomorphisms $f,g$ of $X$ define the $C^0$-\emph{metric} as 
\begin{equation}
 \label{equC0}
\dist_{C^0}(f,g)=\max_{x\in X} \dist(f(x),g(x))+\max_{x\in X} \dist(f^{-1}(x),g^{-1}(x)).
\end{equation}
Given two Lipschitz homeomorphisms $f,g\colon X\to X$ 
of a compact metric space $(X,\dist)$ consider the following pseudometric
\[
  \dist_W'(f,g)=\sup_{x\neq y}\frac{|\dist(f(x),f(y))-\dist(g(x),g(y))|}{\dist(x,y)}.
 \]
\begin{rmk}
 \textcolor{black}{It is not a metric. In fact it holds that $\dist_W'(f,g)=0$ if and only if $f\circ g^{-1}$ is an isometry of $(X,\dist)$. 
 This is a direct consequence of the definition.}
\end{rmk}

 If $f,g$ are bi-Lipschitz homeomorphisms define the metric
\begin{equation}\label{walterTopo}
  \dist_W(f,g)=\dist_{C^0}(f,g)+\dist_W'(f,g) +\dist_W'(f^{-1},g^{-1}).
 \end{equation}
\end{df}

\begin{rmk}
{ Notice that $\dist_W$ depends on the metric $\dist$ of $X$.}
\end{rmk}

\begin{df}
 A bi-Lipschitz homeomorphism $f\colon X\to X$ {is \emph{$\dist_W$-robustly expansive} if }
 there are $\epsilon,\delta>0$ such that if $\dist_W(f,g)<\epsilon$ then $g$ is expansive with expansive constant $\delta$.
\end{df}

\begin{teo}
\label{teoWalters}
 Every hyperbolic homeomorphism is $\dist_W$-robustly expansive.
\end{teo}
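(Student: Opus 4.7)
The plan is to transfer the hyperbolicity of $f$ to any $g$ sufficiently close in $\dist_W$, and then to invoke the classical fact that a hyperbolic homeomorphism is automatically expansive with an expansive constant depending only on its hyperbolic parameters.

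Let $\delta>0$ and $\lambda>1$ be an expansive constant and an expanding factor for $f$. I will pick $\epsilon>0$ with $\epsilon<\lambda-1$ (say $\epsilon=(\lambda-1)/2$) and set $\lambda':=\lambda-\epsilon>1$. For any bi-Lipschitz $g$ with $\dist_W(f,g)<\epsilon$, by definition of $\dist_W$ one has $\dist_W'(f,g)<\epsilon$ and $\dist_W'(f^{-1},g^{-1})<\epsilon$. Unpacking the definition of $\dist_W'$ gives, for every $x\neq y$,
$$\dist(g(x),g(y))\geq \dist(f(x),f(y))-\epsilon\,\dist(x,y),$$
together with the analogous inequality for $g^{-1}$. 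Combining these with the hyperbolic inequality for $f$, whenever $\dist(x,y)<\delta$ I obtain
$$\max\{\dist(g(x),g(y)),\dist(g^{-1}(x),g^{-1}(y))\}\geq (\lambda-\epsilon)\,\dist(x,y)=\lambda'\,\dist(x,y),$$
so $g$ is itself hyperbolic with the same expansive constant $\delta$ and expanding factor $\lambda'>1$.

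Next I will show that any homeomorphism hyperbolic with parameters $(\delta,\lambda')$ is expansive with expansive constant any $\delta_0<\delta$. Given $x,y$ with $\dist(g^n(x),g^n(y))\leq\delta_0$ for all $n\in\Z$, set $c:=\sup_{n\in\Z}\dist(g^n(x),g^n(y))\leq\delta_0<\delta$. Choose $n_k\in\Z$ with $\dist(g^{n_k}(x),g^{n_k}(y))\to c$, and by compactness of $X$ extract convergent subsequences $g^{n_k}(x)\to x^*$, $g^{n_k}(y)\to y^*$, so $\dist(x^*,y^*)=c<\delta$. If $c>0$, hyperbolicity of $g$ at $(x^*,y^*)$ gives $\max\{\dist(g(x^*),g(y^*)),\dist(g^{-1}(x^*),g^{-1}(y^*))\}\geq\lambda' c>c$, and continuity of $g^{\pm 1}$ then forces some $g^{n_k\pm 1}$-pair to be strictly farther apart than $c$ in the limit, contradicting maximality of $c$. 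Hence $c=0$ and $x=y$. Taking $\delta_0=\delta/2$ yields an expansive constant uniform over all admissible $g$.

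The substantive point is the first step, where the quantity $\dist_W'(f,g)$ turns out to deliver exactly the additive slack needed to preserve the multiplicative hyperbolic inequality; the $C^0$-term in $\dist_W$ plays no role. The only care required in the second step is the compactness-extraction, since without attainment of a maximum in $\Z$ one cannot directly contradict hyperbolicity at a single pair. I do not anticipate any genuine obstacle once the uniform parameters $(\delta,\lambda')$ are fixed.
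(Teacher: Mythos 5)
Your proof is correct, and its key step---transferring the hyperbolic estimate from $f$ to $g$ using the bound $\dist_W'(f,g)<\epsilon$ (together with the analogous bound for the inverses) to get the expanding factor $\lambda'=\lambda-\epsilon>1$ on the same $\delta$-scale---is exactly the paper's argument, just written additively instead of with ratios. The only difference is that you also spell out, via a compactness/supremum argument, that a hyperbolic homeomorphism is expansive with a uniform constant; the paper leaves this standard implication implicit, since its definition of a hyperbolic metric already names $\delta$ an expansive constant.
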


\begin{proof}
Let $f\colon X\to X$ be a hyperbolic homeomorphism with respect to the metric $\dist$ on $X$.
Let $\delta>0$ and $\lambda>1$ be an expansive constant and an expanding factor respectively. 
Take $\epsilon>0$ and $\lambda'>1$ such that $\lambda-\epsilon>\lambda'$. 
We will show that $\lambda'$ is an expanding factor and $\delta$ is an expansive constant 
for $g$ if $\dist_W(f,g)<\epsilon$. 
Take $x,y\in X$ such that $\dist(x,y)\in(0,\delta)$ and assume that $\dist(f(x),f(y))\geq\lambda\dist(x,y)$ 
(the case $\dist(f^{-1}(x),f^{-1}(y))\geq\lambda\dist(x,y)$ is similar). 
Since $\dist_W(f,g)<\epsilon$, we know that 
\[
 \left|\frac{\dist(f(x),f(y))}{\dist(x,y)}-\frac{\dist(g(x),g(y))}{\dist(x,y)}\right|<\epsilon<\lambda-\lambda'.
\]
Then 
\[
 \lambda'\leq\lambda'+\frac{\dist(f(x),f(y))}{\dist(x,y)}-\lambda<\frac{\dist(g(x),g(y))}{\dist(x,y)}
\]
because
\[
 0\leq \frac{\dist(f(x),f(y))}{\dist(x,y)}-\lambda.
\]
Then
$\lambda'\dist(x,y)\leq\dist(g(x),g(y)).$
Therefore $\delta$ is an expansive constant and $\lambda'$ is an expanding factor for $g$.
\end{proof}

\begin{cor}
 \textcolor{black}{For every expansive homeomorphism $f$ of a compact metrizable space $X$ there is a 
 metric $\dist$ in $X$ defining its topology and making $f$ a $\dist_W$-robustly expansive bi-Lipschitz homeomorphism.}
\end{cor}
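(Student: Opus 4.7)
The plan is to invoke Fathi's theorem (recalled in Remark \ref{fathiMetric}) together with Theorem \ref{teoWalters}; the two results combine almost immediately.

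First I would apply Remark \ref{fathiMetric} to the expansive homeomorphism $f\colon X\to X$. This yields a metric $\dist$ on $X$ that defines the original topology, is $f$-hyperbolic, and with respect to which $f$ is bi-Lipschitz. At this point the three required features of the desired metric — inducing the correct topology, making $f$ bi-Lipschitz, and making $f$ hyperbolic — are all in place.

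Next I would observe that Theorem \ref{teoWalters} applies verbatim to $f$ with this metric $\dist$, since its hypothesis is precisely that $f$ be hyperbolic. The conclusion of that theorem is that $f$ is $\dist_W$-robustly expansive, which is exactly the final property claimed in the corollary. Note that $\dist_W$ is well defined on pairs of bi-Lipschitz homeomorphisms (cf.\ \eqref{walterTopo}), so the bi-Lipschitz property from Fathi is essential to even state the robustness.

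There is no real obstacle here: the corollary is essentially the composition of the two results, and the only thing to check is that the metric produced by Fathi is eligible as an input to Theorem \ref{teoWalters}, which it is by construction. The proof can be written in two or three lines.
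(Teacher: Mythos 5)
Your proof is correct and follows exactly the paper's own argument: invoke Fathi's theorem (via Remark \ref{fathiMetric}) to get an $f$-hyperbolic metric with respect to which $f$ is bi-Lipschitz, then apply Theorem \ref{teoWalters}. Nothing further is needed.
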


\begin{proof}
 \textcolor{black}{It follows by \cite[Theorem 5.1]{Fa89} (see Remark \ref{fathiMetric}) and Theorem \ref{teoWalters}.}
\end{proof}

\section{Lipschitz topology}
\label{secLips}

The purpose of this section is to prove that $\dist_W$ induces \textcolor{black}{a} Lipschitz topology in the space of 
bi-Lipschitz homeomorphisms of $X$.

\begin{df}
 Given two bi-Lipschitz homeomorphisms $f,g\colon X\to X$ of a compact metric space $(X,\dist)$ 
 we define 
 \begin{equation}\label{ecuLipsMet}
\dist_L'(f,g)=  \sup_{x\neq y} \left|\log\left(\frac{\dist(f(x),f(y))}{\dist(g(x),g(y))}\right)\right|
 \end{equation}
and the \emph{Lipschitz metric} as
 \[ 
    \dist_L(f,g)= \dist_{C^0}(f,g)+\dist_L'(f,g)+\dist_L'(f^{-1},g^{-1}).
 \]
\end{df}

\begin{rmk}
 \textcolor{black}{As for $\dist_W'$, it holds that $\dist_L'(f,g)=0$ if and only if 
 $f\circ g^{-1}$ is an isometry.}
\end{rmk}

Let us explain why we call it Lipschitz metric. 

\begin{df}
 A \emph{normed group} is a group $G$ with identity $e\in G$ and a function $\|\cdot\|\colon G\to\R$ 
 satisfying:
 \begin{enumerate}
  \item $\|f\|\geq 0$ for all $f\in G$ with equality only at $f=e$,
  \item $\|f\|=\|f^{-1}\|$ for all $f\in G$ and 
  \item $\|fg\|\leq \|f\|+\|g\|$ for all $f,g\in G$.
 \end{enumerate}
\end{df}

Every norm induces a distance as 
$$\dist_{\|\cdot\|}(f,g)=\|fg^{-1}\|+\|g^{-1}f\|.$$
\begin{rmk}
\textcolor{black}{In some sense, we measure how far are $fg^{-1}$ and $g^{-1}f$ from the identity. 
In \cite{WW} it is called as an \emph{operational metric}.}
\end{rmk}
For example, in the group of homeomorphisms of $X$ we can consider the $C^{0}$-\emph{norm}
$$\|f\|_{C^0}=\max_{x\in X} \dist(x,f(x)).$$
And we obtain that $\dist_{\C^0}$ of Equation (\ref{equC0}) is the distance induced by the $C^0$-norm.
\textcolor{black}{See for example \cite{BO} for more on normed groups.}

Let $\lip=\lip(X,\dist)$ denote the group of bi-Lipschitz homeomorphisms of 
a compact metric space $(X,\dist)$. 
The operation in $\lip$ is composition.
As usual, define the \emph{Lipschitz constant} of $f\in\lip$ as 
\[
 \lips(f)=\sup_{x\neq y} \frac{\dist(f(x),f(y))}{\dist(x,y)}.
\]
It is easy to see that:
\begin{enumerate}
 \item $\lips(f)\geq 1$ for all $f\in\lip$ and 
 \item $\lips(f\circ g)\leq \lips(f)\lips(g)$ for all $f,g\in\lip$.
\end{enumerate}
In light of these properties, it is natural to define $\loglips(f)=\log(\lips(f))$
and to consider $$\|f\|_L'=\max\{\loglips(f),\loglips(f^{-1})\}.$$
We have that $(\lip,\|\cdot\|_L')$ is a semi-normed group. 
In fact $\|f\|_L'=0$ if and only if $f$ is an isometry. 
In order to obtain a norm we define $$\|f\|_L=\|f\|_{C^0} + \|f\|_L'.$$

The following proposition explains why we say that $\dist_L$ from Equation (\ref{ecuLipsMet}) is a Lipschitz metric in $\lip$.

\begin{prop}
 It holds that $\dist_{\|\cdot\|_L}=\dist_L$.
\end{prop}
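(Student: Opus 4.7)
The plan is to expand $\dist_{\|\cdot\|_L}(f,g) = \|fg^{-1}\|_L + \|g^{-1}f\|_L$ according to the definition $\|h\|_L = \|h\|_{C^0} + \max\{\loglips(h),\loglips(h^{-1})\}$ and then identify each resulting piece with a term in $\dist_L(f,g)$. There are two independent computations: one for the $C^0$ parts and one for the Lipschitz parts.

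First I would deal with the $C^0$ contributions. Using the change of variable $y = g^{-1}(x)$ gives
\[
\|fg^{-1}\|_{C^0} = \max_x \dist(x, fg^{-1}(x)) = \max_y \dist(f(y), g(y)),
\]
and, symmetrically, with $z = f(x)$,
\[
\|g^{-1}f\|_{C^0} = \max_z \dist(f^{-1}(z), g^{-1}(z)).
\]
Adding yields exactly $\dist_{C^0}(f,g)$ as defined in Equation (\ref{equC0}).

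Next I would handle the Lipschitz parts. Writing $r(x,y) = \dist(f(x),f(y))/\dist(g(x),g(y))$ and changing variables in the supremum (putting $u=g(x), v=g(y)$) gives $\loglips(fg^{-1}) = \sup_{x\neq y} \log r(x,y)$ and $\loglips(gf^{-1}) = \sup_{x\neq y}(-\log r(x,y))$. Since $|t| = \max(t,-t)$, interchanging $\max$ with $\sup$ produces
\[
\max\{\loglips(fg^{-1}),\loglips(gf^{-1})\} = \sup_{x\neq y} |\log r(x,y)| = \dist_L'(f,g).
\]
An identical computation with $f,g$ replaced by $f^{-1},g^{-1}$ (using the substitutions $z=f(x), w=f(y)$ and $z=g(x), w=g(y)$) gives $\max\{\loglips(g^{-1}f),\loglips(f^{-1}g)\} = \dist_L'(f^{-1},g^{-1})$.

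The only subtle step is the swap $\max$--$\sup$ in the middle calculation, which relies only on the elementary identity $|t|=\max(t,-t)$; everything else is routine bookkeeping with changes of variable. Summing the four pieces reproduces $\dist_L(f,g)$ exactly, completing the proof.
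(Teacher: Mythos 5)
Your proof is correct and follows essentially the same route as the paper: expand $\|fg^{-1}\|_L+\|g^{-1}f\|_L$, identify the $C^0$ parts with $\dist_{C^0}(f,g)$ by a change of variables, and show $\|fg^{-1}\|'_L=\dist_L'(f,g)$ (and similarly $\|g^{-1}f\|'_L=\dist_L'(f^{-1},g^{-1})$) by substituting in the suprema and using $|t|=\max\{t,-t\}$ together with the interchange of $\max$ and $\sup$. The only difference is cosmetic: you spell out the $C^0$ identification and the second primed term, which the paper leaves as immediate or ``similar.''
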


\begin{proof}
By the definitions we have that 
\[
\begin{array}{ll}
 \dist_{\|\cdot\|_L}(f,g) & =\|fg^{-1}\|_L+\|g^{-1}f\|_L\\
                          & =\|fg^{-1}\|_{C^0}+ \|fg^{-1}\|'_L+\|g^{-1}f\|_{C^0}+\|g^{-1}f\|'_L\\
                          & =\dist_{C^0}(f,g)+\|fg^{-1}\|'_L+\|g^{-1}f\|'_L
\end{array}
\]
and
\[
 \dist_L(f,g)=\dist_{C^0}(f,g)+\dist_L'(f,g)+\dist_L'(f^{-1},g^{-1}).
\]
Therefore, it is sufficient to prove that $\dist_L'(f,g)=\|fg^{-1}\|'_L$. 
A similar argument will give us $\dist_L'(f^{-1},g^{-1})=\|g^{-1}f\|'_L$. 
Applying the definitions we have:
\[
\begin{array}{l}
\|fg^{-1}\|'_L  = \max\{\loglips(fg^{-1}),\loglips (gf^{-1})\}\\  
\displaystyle= \max \left\{\sup_{x\neq y}\log\frac{\dist(fg^{-1}(x),fg^{-1}(y))}{\dist(x,y)},\sup_{x\neq y}\log\frac{\dist(gf^{-1}(x),gf^{-1}(y))}{\dist(x,y)}\right\}\\  
\displaystyle= \max \left\{\sup_{x\neq y}\log\frac{\dist(f(x),f(y))}{\dist(g(x),g(y))},\sup_{x\neq y}\log\frac{\dist(g(x),g(y))}{\dist(f(x),f(y))}\right\}\\  
\displaystyle=\sup_{x\neq y} \left|\log\left(\frac{\dist(f(x),f(y))}{\dist(g(x),g(y))}\right)\right|=\dist'_L(f,g).
\end{array}
\]
And the proof ends.
\end{proof}

\begin{prop}
\label{propEquivMetrics}
The metrics $\dist_L$ and $\dist_W$ define the same topology on $\lip$.
\end{prop}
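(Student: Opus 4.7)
The plan is to fix $f \in \lip$ and show that a $\dist_W$-ball around $f$ contains a $\dist_L$-ball around $f$, and conversely. Since both metrics share the $\dist_{C^0}$ summand, the task reduces to comparing $\dist_W'$ with $\dist_L'$ (and the analogous quantity for the inverses). To keep notation light I would write $a=\dist(f(x),f(y))$, $b=\dist(g(x),g(y))$, $c=\dist(x,y)$, so that $\dist_W'(f,g)=\sup_{x\neq y}|a-b|/c$ and $\dist_L'(f,g)=\sup_{x\neq y}|\log(a/b)|$.

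The algebraic bridge between the two expressions is the elementary inequality
\[
\tfrac{1}{2}\bigl|\log(a/b)\bigr|\;\leq\;\frac{|a-b|}{\min(a,b)}\;\leq\;2\bigl|\log(a/b)\bigr|
\]
valid whenever $|\log(a/b)|\leq 1/2$. To translate between the denominators $c$ and $\min(a,b)$, I would use $a/c\leq \lips(f)$, $c/a\leq\lips(f^{-1})$ and the analogous bounds for $g$. Thus $|a-b|/c$ and $|\log(a/b)|$ agree up to a factor built from the Lipschitz constants of $f,f^{-1},g,g^{-1}$.

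The key bookkeeping step, therefore, is to verify that if $g$ is close to $f$ in either metric, then $\lips(g)$ and $\lips(g^{-1})$ stay bounded in terms of $\lips(f)$ and $\lips(f^{-1})$. On the $\dist_W$ side, $\dist_W'(f,g)<\epsilon$ gives $b/c\in[a/c-\epsilon,a/c+\epsilon]$; for $\epsilon<1/(2\lips(f^{-1}))$ this yields $b/c\geq 1/(2\lips(f^{-1}))$, whence $\lips(g^{-1})\leq 2\lips(f^{-1})$, and clearly $\lips(g)\leq\lips(f)+\epsilon$. On the $\dist_L$ side, $\dist_L'(f,g)<\epsilon$ gives $b/c\in[e^{-\epsilon}a/c,e^\epsilon a/c]$, so $\lips(g)\leq e^\epsilon\lips(f)$ and $\lips(g^{-1})\leq e^\epsilon\lips(f^{-1})$. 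The same reasoning applied to $f^{-1},g^{-1}$ controls the other half of the metric.

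With these uniform bounds in hand, one direction is immediate: $\dist_W'(f,g)<\epsilon$ implies $|a-b|/\min(a,b)\leq\epsilon\cdot\max(c/a,c/b)\leq 2\epsilon\lips(f^{-1})$, and then the logarithmic inequality gives $\dist_L'(f,g)\leq 4\epsilon\lips(f^{-1})$ (for $\epsilon$ small). Conversely, $\dist_L'(f,g)<\epsilon$ gives $|a-b|\leq(e^\epsilon-1)b$, so dividing by $c$ and using $b/c\leq e^\epsilon\lips(f)$ yields $\dist_W'(f,g)\leq(e^\epsilon-1)e^\epsilon\lips(f)$. Arguing identically on the inverses and combining with the common $\dist_{C^0}$ term shows that the identity map $(\lip,\dist_W)\to(\lip,\dist_L)$ is a homeomorphism. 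The only subtlety—and I expect this to be the place where the argument feels slightly delicate rather than hard—is verifying that the Lipschitz constants of $g$ and $g^{-1}$ are controlled by the metric distance to $f$; the rest is routine manipulation of the inequality $|\log r|\asymp|r-1|$ near $r=1$.
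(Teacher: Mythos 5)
Your proof is correct and follows essentially the same route as the paper's: after splitting off the common $\dist_{C^0}$ term, both arguments compare $\dist_W'$ and $\dist_L'$ by exploiting that the ratios $\dist(f(x),f(y))/\dist(x,y)$ and $\dist(g(x),g(y))/\dist(x,y)$ lie in a compact interval of $(0,\infty)$ determined by $\lips(f),\lips(f^{-1})$ (hence also controlling $\lips(g),\lips(g^{-1})$ for nearby $g$), where $\log$ and $\exp$ are Lipschitz. You simply make explicit the bookkeeping that the paper leaves as ``standard techniques.''
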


\begin{proof}
For $f\in\lip$ define $$\delta_0=\min\{1/\lips(f),1/\lips(f^{-1})\}$$ 
and $$\delta_1=\max\{\lips(f),\lips(f^{-1})\}.$$
Given $\delta\in(0,\delta_0)$ consider $k>0$ such that 
if $u,v\in [\delta_0-\delta,\delta_1+\delta]$ then 
$$|\log u-\log v|\leq k|u-v|.$$ 
Then, with standard techniques, it can be proved that 
if $\dist_W(f,g)<\delta$ then $\dist_L(f,g)<k\delta$. 
For this purpose, one has to note that
\[
\log\left(\frac{\dist(f(x),f(y))}{\dist(g(x),g(y))}\right)=
\log\left(\frac{\dist(f(x),f(y))}{\dist(x,y)}\right)-\log\left(\frac{\dist(g(x),g(y))}{\dist(x,y)}\right).
\]
The other part of the proof follows by similar arguments.
\end{proof}

\begin{df}
 A bi-Lipschitz homeomorphism $f\in\lip$ is \emph{Lipschitz-robustly expansive} if 
 there are $\epsilon,\delta>0$ such that if $\dist_L(f,g)<\epsilon$ then $g$ is expansive with expansive constant $\delta$.
\end{df}

\begin{cor}
\label{expQA}
 Every hyperbolic homeomorphism is Lipschitz-robustly expansive. 
 Or equivalently, every expansive homeomorphism 
 of a compact metric space
 is Lipschitz robustly expansive with respect to an $f$-hyperbolic metric.
\end{cor}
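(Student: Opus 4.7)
The plan is to piece together the three previous results in the excerpt: Theorem \ref{teoWalters} (hyperbolic $\Rightarrow$ $\dist_W$-robustly expansive), Proposition \ref{propEquivMetrics} ($\dist_L$ and $\dist_W$ define the same topology on $\lip$), and Remark \ref{fathiMetric} (Fathi's hyperbolic metric for any expansive homeomorphism).

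First I would handle the first sentence. Let $f$ be hyperbolic with respect to $\dist$. By Theorem \ref{teoWalters} there exist $\epsilon_W,\delta>0$ such that every $g\in\lip$ with $\dist_W(f,g)<\epsilon_W$ is expansive with expansive constant $\delta$. The open $\dist_W$-ball $B_W(f,\epsilon_W)$ is open in the $\dist_W$-topology on $\lip$, hence open in the $\dist_L$-topology by Proposition \ref{propEquivMetrics}. Therefore there exists $\epsilon_L>0$ with $B_L(f,\epsilon_L)\subseteq B_W(f,\epsilon_W)$, which means every $g$ satisfying $\dist_L(f,g)<\epsilon_L$ also satisfies $\dist_W(f,g)<\epsilon_W$ and is consequently expansive with constant $\delta$. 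This is exactly the definition of Lipschitz-robustly expansive.

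Next I would deduce the equivalent form. Given an expansive homeomorphism $f$ of a compact metrizable space $X$, apply Fathi's theorem (Remark \ref{fathiMetric}) to obtain an $f$-hyperbolic metric $\dist$ defining the original topology, with respect to which $f$ is bi-Lipschitz, so $f\in\lip(X,\dist)$. Then the first part of the corollary, already proved, says $f$ is Lipschitz-robustly expansive with respect to $\dist$.

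The only step with any content is the passage from $\dist_W$-robustness to $\dist_L$-robustness, and this is not really an obstacle: robust expansiveness is a statement about the existence of a neighborhood of $f$ in the relevant topology on which the expansive constant $\delta$ remains valid, and since the two metrics induce the same topology, any such $\dist_W$-neighborhood contains a $\dist_L$-neighborhood. No quantitative comparison between $\epsilon_W$ and $\epsilon_L$ is needed, since $\delta$ itself does not depend on the metric chosen on $\lip$.
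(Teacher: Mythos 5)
Your proposal is correct and follows the paper's own route: combine Theorem \ref{teoWalters} with Proposition \ref{propEquivMetrics} (a $\dist_W$-neighborhood contains a $\dist_L$-neighborhood, and the expansive constant $\delta$ is unaffected), and invoke Fathi's hyperbolic metric (Remark \ref{fathiMetric}) for the restatement about expansive homeomorphisms. The paper records this as a direct consequence of those two results; you have simply written out the details.
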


\begin{proof}
 It is a direct consequence of Theorem \ref{teoWalters} and Proposition \ref{propEquivMetrics}.
\end{proof}

Recall that a $C^1$ diffeomorphism $f\colon M\to M$ is \emph{quasi-Anosov} if the set
$\{\|df^n(v)\|:n\in\Z\}$ is unbounded for all $v\neq0$. 
It is known \cite{Ma75} that quasi-Anosov diffeomorphisms are $C^1$-robustly expansive (considering $C^1$ perturbations).

\begin{prop}
\label{QALipsSS}
 Quasi-Anosov diffeomorphisms are Lipschitz-robustly expansive with respect to every Riemannian metric $\dist_g$.
\end{prop}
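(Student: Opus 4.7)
The plan is to apply Corollary~\ref{expQA} to an iterate $f^N$ for large enough $N$, after first showing that $f^N$ is hyperbolic with respect to $\dist_g$.

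For the first step I would use that, by Mañé~\cite{Ma75}, quasi-Anosov implies the existence of a dominated splitting $TM=E\oplus F$ with uniform asymptotic contraction of $E$ and expansion of $F$ by $df$. A Mather-type averaging then produces an adapted Riemannian metric $\tilde{g}$ in which
\[
\max\bigl\{\|df^N v\|_{\tilde{g}},\,\|df^{-N} v\|_{\tilde{g}}\bigr\}\geq \lambda\,\|v\|_{\tilde{g}}\qquad\text{for every } v\in TM,
\]
with $\lambda>1$ arbitrarily large by taking $N$ large. Since $\tilde{g}$ and $g$ are continuous Riemannian metrics on the compact $M$, they are bi-Lipschitz equivalent with some constant $C\geq 1$, so the same inequality holds in the $g$-norm with factor $\lambda/C^2$. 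Choosing $\lambda>C^2$ and passing from the infinitesimal to the distance level via first-order Taylor expansions of $f^{\pm N}$, one concludes that $f^N$ is hyperbolic with respect to $\dist_g$ in the sense of Section~\ref{secHypMet}. Corollary~\ref{expQA} then gives that $f^N$ is Lipschitz-robustly expansive with respect to $\dist_g$, and since a bi-Lipschitz homeomorphism $h$ is expansive if and only if $h^N$ is (with expansive constants related by a power of $\lips(h)$), it suffices to show that small $\dist_L$-perturbations $h$ of $f$ have $h^N$ within the corresponding neighborhood of $f^N$.

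The hard part will be precisely this last step. The naive composition bound $\lips(h^N f^{-N})\leq\lips(hf^{-1})^N\lips(f)^{N(N-1)}$ contains a factor $\lips(f)^{N(N-1)}$ that is bounded but does not tend to $1$ as $\dist_L(f,h)\to 0$, so one cannot simply transfer a $\dist_L$-neighborhood of $f$ into a $\dist_L$-neighborhood of $f^N$. To circumvent this I would not invoke Corollary~\ref{expQA} as a black box for the pair $(f^N,h^N)$, but rather adapt the proof of Theorem~\ref{teoWalters} directly: what is really needed is the proximity of the distance-expansion ratio $\dist_g(h^N x,h^N y)/\dist_g(x,y)$ to $\dist_g(f^N x,f^N y)/\dist_g(x,y)$ on small pairs $x,y$. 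This is obtained by telescoping
\[
\log\frac{\dist_g(h^N x,h^N y)}{\dist_g(x,y)}=\sum_{k=0}^{N-1}\log\frac{\dist_g(h(h^k x),h(h^k y))}{\dist_g(h^k x,h^k y)},
\]
bounding each summand by the corresponding $f$-ratio at $(h^k x,h^k y)$ up to an error of size $\dist_L(f,h)$, and then using a standard $C^0$-shadowing estimate $\dist_g(h^k x,f^k x)\leq\dist_{C^0}(f,h)\,\lips(h)^k/(\lips(h)-1)$ for $k\leq N$, together with uniform continuity of $f^{\pm N}$ on $M$, to replace the intermediate $h$-orbit points by $f$-orbit points with negligible error. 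Combined with the strict hyperbolicity factor $\lambda/C^2>1$ of $f^N$, this yields expansiveness of $h^N$ (hence of $h$) for all $h$ sufficiently close to $f$ in $\dist_L$.
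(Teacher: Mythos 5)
Your overall strategy is the same as the paper's: reduce to showing that $\dist_g$ is hyperbolic for some iterate $f^N$ and then invoke the robust-expansiveness machinery of Section \ref{secHypMet}. However, there are two problems, one in each half of your argument.

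First, the claim that quasi-Anosov implies a global dominated splitting $TM=E\oplus F$ with asymptotic contraction of $E$ and expansion of $F$ is false: such a splitting (after Mather averaging) would make $f$ Anosov, and there are quasi-Anosov diffeomorphisms that are not Anosov (Franks--Robinson type examples, where $\dim E^s_x+\dim E^u_x$ drops at wandering points), so no such bundles exist over all of $M$. The intermediate conclusion you want, namely a uniform $N$ with $\max\{\|df^Nv\|,\|df^{-N}v\|\}\geq\lambda\|v\|$ for all $v\neq0$, is nevertheless true and is exactly what the paper quotes from \cite[Lemma 2.3]{Le80}; moreover that lemma gives it directly in the given Riemannian metric, so the detour through an adapted metric $\tilde g$ and the loss of the bi-Lipschitz constant $C^2$ is both unjustified as you derive it and unnecessary. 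The passage from the infinitesimal inequality to hyperbolicity of $\dist_g$ for $f^N$ via exponential charts is the same step as in the paper.

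Second, you correctly isolate the delicate point that the paper passes over quickly (``this is sufficient too''): $h\mapsto h^N$ is not continuous for $\dist_L$, so Corollary \ref{expQA} applied to $f^N$ does not immediately control $h^N$. But your proposed repair does not close this gap. After telescoping, each summand is controlled by the $f$-ratio $\log\bigl(\dist_g(f(h^kx),f(h^ky))/\dist_g(h^kx,h^ky)\bigr)$ up to $\dist_L(f,h)$, and you then want to replace the pair $(h^kx,h^ky)$ by $(f^kx,f^ky)$ ``with negligible error'' using a $C^0$-shadowing bound and uniform continuity of $f^{\pm N}$. Uniform continuity of the maps (or even of $df$) controls the \emph{positions} of these points, not the difference quotients: expansiveness must be verified for pairs $x,y$ with $\dist_g(x,y)$ arbitrarily small, in particular much smaller than $\dist_{C^0}(f,h)$, and in that regime the direction of the displacement of $(h^kx,h^ky)$ bears no relation to that of $(f^kx,f^ky)$ even though the points are $C^0$-close. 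Since a quasi-Anosov $f$ stretches different directions by very different factors, the per-step discrepancy between the two $f$-ratios is bounded away from zero no matter how small $\dist_L(f,h)$ is; the example in Section \ref{secC1} of a map $\dist_W$-close to the identity whose differential at a point is a rotation by $\pi$ shows exactly that Lipschitz closeness gives no directional control. So the final step, as written, fails, and the argument needs a genuinely different mechanism at this point.
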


\begin{proof}
Let $f$ be a quasi-Anosov diffeomorphism.
By Corollary \ref{expQA} 
 it would be sufficient to prove that $\dist_g$ is hyperbolic for $f$, but this may not 
 be true because the Riemannian metric may not be an adapted metric. 
 Instead, we will show that  $\dist_g$ is hyperbolic for $f^n$ for some $n\geq 1$.
This is sufficient too. 
Consider $n$ such that for all $v\neq 0$, $v\in TM$, it holds that 
$\|df^n(v)\|>2\|v\|$ or $\|df^{-n}(v)\|>2\|v\|$. 
Such $n$ exists, as it is proved in \cite[Lemma 2.3]{Le80} (there it is assumed that $f$ is Anosov but it only uses that $f$ 
is quasi-Anosov).
Now, using the exponential map of the Riemannian metric, we can locally lift $f$ to the tangent fiber bundle. 
In this way we can view $f$ as a small Lipschitz perturbation of $df$. 
Consequently, $\dist_g$ is hyperbolic for $f^n$.
\end{proof}

\section{Lipschitz structural stability}
\label{secSS}
In this section we will consider some shadowing properties to study the structural stability 
allowing Lipschitz perturbations.
\begin{df}
 We say that $f\in \lip(X,\dist)$ is \emph{Lipschitz structurally stable} 
 if {for all $\epsilon>0$ there is $\delta>0$ 
 such that if $\dist_L(f,g)<\delta$ then 
 there is a homeomorphism $h\colon X\to X$ such that 
 $f h=h g$ and $\dist_{C^0}(h,\id)<\epsilon$.}
\end{df}

\begin{df} 
A homeomorphism $f\colon X\to X$ has the \emph{weak shadowing property} 
% (or the \emph{pseudo-orbit tracing property}) 
if for all $\epsilon>0$ there is $\delta>0$ such that 
if $\dist_{C^0}(f,g)<\delta$ then for all $x\in X$ there is $y\in X$ 
such that $\dist(f^n(y),g^n(x))<\epsilon$ for all $n\in\Z$.
\end{df}

\textcolor{black}{In \cite{PiRoSa} the expression \emph{weak shadowing property} is considered with a different meaning.}
Let us recall that a $\delta$-\emph{pseudo-orbit} is a sequence $x_n\in X$ with $n\in\Z$ such that 
$\dist(f(x_n),x_{n+1})<\delta$ for all $n\in\Z$. 
A homeomorphism $f\colon X\to X$ has the (usual) \emph{shadowing property} if 
for all $\epsilon>0$ there is $\delta>0$ such that if $\{x_n\}_{n\in\Z}$ is a $\delta$-pseudo-orbit 
then there is $y\in X$ such that 
$\dist(f^n(y),x_n)<\epsilon$ for all $n\in\Z$. 
This is also called as \emph{pseudo-orbit tracing property}.

\begin{rmk}
\label{rmkShadow}
The shadowing property implies the weak shadowing property. 
Theorems 4,5 and 6 in \cite{Wa78} are stated with the shadowing property but the proofs 
only need the weak shadowing property. {To see that this is true, the reader should check that the proofs of Theorems 
5 and 6 uses the shadowing property via Theorem 4, and in the proof of Theorem 4 it is used, in its first paragraph, 
considering a pseudo-orbit that is a true orbit of the perturbed homeomorphism there called as $S$.}
\end{rmk}

In the general setting of metric spaces, pseudo-orbits may be far from being real orbits of $C^0$ perturbations as we explain 
with the following example.

\begin{rmk}
On compact metric spaces there are homeomorphisms with the weak shadowing property but without the usual shadowing property. 
Let us give an example. Let $f\colon X\to X$ be a homeomorphism of a compact metric space $X$ with three fixed points $a,b,c\in X$. 
Also assume that there are $I\subset X$ and $y\in X$ such that $I$ is a compact arc, $I$ is also an open subset of $X$, $\lim_{n\to-\infty} f^n(I)=a$, 
$\lim_{n\to\infty} f^n(I)=b=\lim_{n\to-\infty} f^n(y)$,
$\lim_{n\to\infty} f^n(y)=c$ and 
$$X=\{a,b,c\}\cup\{f^n(y):n\in \Z\}\cup \bigcup_{n\in \Z}f^n(I).$$ 
See Figure \ref{figExample}. 
\begin{figure}[h]
\begin{center}
 \includegraphics{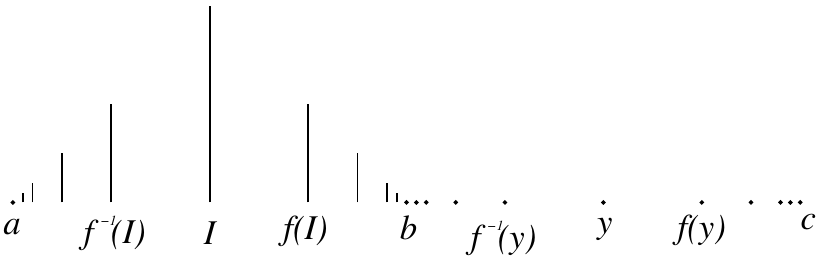}
 \caption{The space $X$ is a countable union of points and intervals.
 The homeomorphism $f$ moves isolated points and intervals to the right, while the accumulation points $a,b,c$ are fixed.}
\label{figExample}
\end{center}
\end{figure}
It is easy to see that $f$ has not the usual shadowing property because there are pseudo-orbits starting at $a$ and 
finishing at $c$ without real orbits tracing it. 
In order to prove that $f$ has the weak shadowing property we have to note that 
every homeomorphism of $X$, in particular a perturbation of $f$, has to fix $a,b,c$ and 
preserve the left part of $b$ in the figure.
\end{rmk}

Let us give another shadowing property that was introduced in \cite{Le83}.

\begin{df}
 A homeomorphism $f\colon X\to X$ is \emph{persistent} if for all $\epsilon>0$ there is $\delta>0$ such that 
 if $\dist_{C^0}(f,g)<\delta$ then for all $x\in X$ there is $y\in X$ such that 
 $\dist(f^n(x),g^n(y))<\epsilon$ for all $n\in\Z$.
\end{df}

Note {the 
difference} between this definition and the weak shadowing property.
{It is known that pseudo-Anosov homeomorphisms of surfaces are persistent but 
they have not the weak shadowing property.}

The proof of the following result uses well known techniques that can be found in \cite{Wa78} 
for the case of the shadowing property. 
For the case of $f$ persistent \textcolor{black}{see \cite[Section 1]{Le83} (Lemma 1.2 and the Remark below it).}

\begin{teo}
\label{structural}
 Let $f\in\lip(M,\dist)$ with $\dist$ an $f$-hyperbolic metric and $M$ a compact manifold without boundary.
 If $f$ is persistent or has the weak shadowing property then $f$ is Lipschitz structurally stable.
\end{teo}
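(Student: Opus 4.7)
The plan is to run the standard Walters/Lewowicz conjugation scheme in both cases, using the hyperbolic metric for uniqueness/continuity of the shadowing orbit and the manifold structure for surjectivity. The novelty relative to the classical proofs is to keep track of where expansiveness of the perturbation $g$ is used, since in the Lipschitz setting this is automatic by Corollary \ref{expQA}: if $\dist_L(f,g)$ is small enough, then $g$ is expansive with (say) the same expansive constant and expanding factor (slightly worsened) as $f$.

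For the weak shadowing case I would first fix $\epsilon>0$, shrink it below half of a common expansive constant $\delta_0$ of $f$ and $g$ (valid once $g$ is Lipschitz-close to $f$, by Corollary \ref{expQA}), and then pick $\delta>0$ so small that $\dist_L(f,g)<\delta$ implies $\dist_{C^0}(f,g)<\delta_w$, where $\delta_w$ is the weak-shadowing parameter associated to $\epsilon$. For each $x\in M$, the $g$-orbit of $x$ is weakly shadowed by some $f$-orbit of a point $h(x)$; expansiveness of $f$ makes $h(x)$ unique, and since the tracking is within $\epsilon$ at $n=0$ one has $\dist_{C^0}(h,\id)<\epsilon$. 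The identity $fh=hg$ follows because $fh(x)$ also $\epsilon$-shadows the $g$-orbit of $g(x)$, so by uniqueness $fh(x)=h(g(x))$. Injectivity of $h$ uses expansiveness of $g$ (two points $x\ne y$ with $h(x)=h(y)$ would have their $g$-orbits both $2\epsilon$-close). Continuity of $h$ is a direct consequence of the hyperbolic metric: two orbits of $f$ that remain within the expansive constant for all $n$ are determined by any one of their coordinates in a Lipschitz fashion, with a modulus controlled by $\lambda$ and $\delta_0$.

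For the persistent case I would instead define $h'(x)$ as the unique point of $M$ whose $g$-orbit $\epsilon$-shadows the $f$-orbit of $x$; uniqueness now requires expansiveness of $g$, which is precisely what Corollary \ref{expQA} supplies once $\dist_L(f,g)$ is small. The same argument as before gives $gh'=h'f$ and $\dist_{C^0}(h',\id)<\epsilon$, and one sets $h=(h')^{-1}$ to recover $fh=hg$. So in both cases the construction of a continuous map $h$ (or $h'$) close to the identity, intertwining $f$ and $g$, proceeds by the same template, with the Lipschitz topology entering only to guarantee expansiveness of $g$.

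The main obstacle, and the place where the manifold-without-boundary hypothesis is needed, is to upgrade $h$ from a continuous injection into $M$ that is $C^0$-close to the identity to an actual homeomorphism of $M$. For this I would argue that if $\dist_{C^0}(h,\id)$ is smaller than the injectivity radius (or simply smaller than a modulus of contractibility supplied by a finite chart cover), then $h$ is homotopic to the identity by a short-geodesic straight-line homotopy, hence has degree $\pm 1$, hence is surjective; combined with injectivity and compactness this gives a homeomorphism. The bookkeeping challenge is choosing a single $\delta>0$ that simultaneously satisfies the weak-shadowing/persistence hypothesis at scale $\epsilon$, the Lipschitz-robust-expansiveness hypothesis at scale $\delta_0$, and the degree-theoretic threshold; all three are upper-semicontinuous constraints, so a small enough $\delta$ works. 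Modulo this parameter juggling, the argument is a transcription of \cite[Theorem 4]{Wa78} in the weak-shadowing case and of \cite[Lemma 1.2]{Le83} in the persistent case, into the Lipschitz setting.
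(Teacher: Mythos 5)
Your proposal follows essentially the same route as the paper: invoke Corollary \ref{expQA} to get expansiveness of the perturbation $g$ with a common expansive constant, build $h$ from the weak-shadowing (resp.\ persistence) tracking points, derive injectivity, continuity, closeness to the identity and the conjugacy identity from expansiveness of $f$ and $g$, and use the compact-manifold-without-boundary hypothesis to get surjectivity of a map $C^0$-close to the identity (your degree-theoretic justification is exactly the fact the paper assumes without proof). The only cosmetic difference is your quantitative phrasing of the continuity of $h$, where the paper uses the simpler limit-plus-expansiveness argument; both work.
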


\begin{proof}
Since $\dist$ is $f$-hyperbolic we know by Corollary \ref{expQA} that $f$ is Lipschitz robustly expansive. 
Therefore, there are $\sigma,\alpha>0$ such that if $\dist_L(f,g)<\sigma$ then 
$g$ is expansive with expansive constant $\alpha$.
In order to prove that $f$ is Lipschitz structurally stable take $\epsilon\in(0,\alpha/2)$. 
Since $M$ is a compact manifold without boundary, we can assume that every continuous function $h\colon M\to M$ 
such that $\dist_{C^0}(h,id_M)<\epsilon$ is surjective.
Assume that $f$ has the weak shadowing property. Therefore, there is 
$\delta\in (0,\sigma)$ such that 
if $\dist_{C^0}(f,g)<\delta$ then 
for all $x\in M$ there is $h(x)\in M$ such that 
\begin{equation}
 \label{ecuestest}
\dist(f^n(h(x)),g^n(x))<\epsilon\hbox{ for all }n\in\Z. 
\end{equation}
In this way we have a function $h\colon M\to M$. 
Assume that $\dist_L(f,g)<\delta$.
If $h(x)=h(x')$ then $\dist(f^n(h(x)),g^n(x))<\epsilon$ and $\dist(f^n(h(x')),g^n(x'))<\epsilon$ for all $n\in\Z$. 
Therefore, $\dist(g^n(x),g^n(x'))<\alpha$ for all $n\in\Z$. Since $\alpha$ is an expansive constant for 
$g$ we have that $x=x'$. Consequently, $h$ is injective. 
Let us prove that $h$ is continuous. 
Assume that $x_n\to x$ and $h(x_n)\to y$. 
Taking limit, we have that $\dist(f^n(y),g^n(x))\leq \epsilon$ for all $n\in\Z$. 
Also, we know that $\dist(f^n(h(x)),g^n(x))<\epsilon$. 
Then $\dist(f^n(h(x)),f^n(y))<\alpha$ for all $n\in\Z$.
Since $\alpha$ is an expansive constant for $f$ we conclude that $y=h(x)$. Therefore $h$ is continuous. 
Notice that by (\ref{ecuestest}) we have that $\dist_{C^0}(h,id_M)<\epsilon$ ($n=0$). 
Therefore, $h$ is surjective and a homeomorphism. 
In order to show that $f\circ h=h\circ g$ fix $x\in M$ and note that by (\ref{ecuestest}) it holds that
$$\dist(f^{n+1}(h(x)),g^{n+1}(x))=\dist(f^n(f(h(x))),g^n(g(x)))<\epsilon$$
for all $n\in\Z$. 
\textcolor{black}{Applying (\ref{ecuestest}) again, we have that}
$$\dist(f^n(h(g(x))),g^n(g(x)))<\epsilon$$
for all $n\in\Z$. And by the triangular inequality
$$\dist(f^n(f(h(x))),f^n(h(g(x))))<\alpha$$ 
for all $n\in \Z$. Then $f(h(x))=g(h(x))$ for all $x\in M$ \textcolor{black}{because $\alpha$ is an expansive constant of $f$. }

The case of $f$ persistent is similar, \textcolor{black}{using the condition:
$\dist(f^n(x),g^n(h(x)))<\epsilon$ for all $n\in\Z$, 
instead of (\ref{ecuestest}).}
\end{proof}

\textcolor{black}{Notice that in the previous proof we only used that $M$ is a compact manifold, 
instead of an arbitrary compact metric space, to ensure that small perturbations of the identity are onto.}

\begin{cor}
\label{corAnosov}
Anosov diffeomorphisms are Lipschitz structurally stable with respect to \textcolor{black}{any} Riemannian metric.
\end{cor}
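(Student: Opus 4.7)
The plan is to reduce Corollary \ref{corAnosov} to Theorem \ref{structural}, but there is an immediate wrinkle: a general Riemannian metric $\dist_g$ on $M$ is not literally $f$-hyperbolic in Fathi's sense for an Anosov diffeomorphism $f$ (only some iterate is, cf.\ Proposition \ref{QALipsSS}). My first move would therefore be to inspect the proof of Theorem \ref{structural} and observe that the $f$-hyperbolicity of the metric is used exclusively in its opening sentence, where Corollary \ref{expQA} is invoked to secure Lipschitz robust expansiveness of $f$; nothing else in the proof needs the metric to be hyperbolic. Consequently, it suffices to verify two properties of $f$ directly with respect to $\dist_g$: Lipschitz robust expansiveness, and the weak shadowing property.

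For Lipschitz robust expansiveness, I would simply appeal to Proposition \ref{QALipsSS}. Every Anosov diffeomorphism is quasi-Anosov, so that proposition applies and yields exactly the statement that $f$ is Lipschitz robustly expansive with respect to any Riemannian metric $\dist_g$. For weak shadowing, I would invoke the classical Anosov shadowing lemma: Anosov diffeomorphisms have the (standard) pseudo-orbit tracing property, and by Remark \ref{rmkShadow} this implies the weak shadowing property.

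With these two ingredients in hand, I would rerun the argument of Theorem \ref{structural} verbatim, substituting the appeal to Corollary \ref{expQA} by the direct invocation of Proposition \ref{QALipsSS}. Concretely, Lipschitz robust expansiveness furnishes constants $\sigma, \alpha > 0$ such that $\dist_L(f,g) < \sigma$ implies $g$ is expansive with constant $\alpha$; weak shadowing then produces, for any $\dist_L(f,g) < \delta < \sigma$, a map $h \colon M \to M$ with $\dist(f^n(h(x)), g^n(x)) < \epsilon$ for all $n$; and compactness with no boundary (as noted in the remark following the proof of Theorem \ref{structural}) guarantees that $h$ is surjective, producing the conjugacy.

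The main subtlety — and the only real obstacle — is the compatibility question between the ``hyperbolic metric'' framework of Theorem \ref{structural} and the arbitrary Riemannian metric in the statement of the corollary. Resolving it amounts to the observation made above: the proof never uses Fathi-hyperbolicity of the metric beyond extracting Lipschitz robust expansiveness, and Proposition \ref{QALipsSS} supplies that conclusion for any Riemannian metric on $M$. Once this point is made, the corollary is immediate.
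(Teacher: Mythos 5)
Your argument is correct, but it follows a genuinely different route from the paper's. The paper keeps Theorem \ref{structural} as a black box: it recalls that Anosov diffeomorphisms have the shadowing property, replaces the given Riemannian metric by an adapted one (so that the induced distance is $f$-hyperbolic), asserts that since all Riemannian metrics are Lipschitz equivalent the Lipschitz topology on the space of bi-Lipschitz homeomorphisms is unchanged, and then applies Theorem \ref{structural} directly. You instead keep the given metric, observe that hyperbolicity of the metric enters the proof of Theorem \ref{structural} only through the appeal to Corollary \ref{expQA} producing Lipschitz robust expansiveness, and substitute Proposition \ref{QALipsSS} for that step (Anosov being quasi-Anosov), with the same shadowing input. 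Your reading of that proof is accurate: the remaining steps use only the expansive constant $\alpha$ for $g$ and for $f$ itself (the latter also follows from robust expansiveness, since $\dist_L(f,f)=0$), the weak shadowing property, and surjectivity of maps $C^0$-close to the identity on a closed manifold, so the substitution is legitimate; in effect you prove the slightly more general statement that Lipschitz robust expansiveness together with weak shadowing implies Lipschitz structural stability. As for what each approach buys: the paper's version avoids reopening the proof of Theorem \ref{structural}, but it leans on the claim that passing to a Lipschitz-equivalent adapted Riemannian metric leaves the $\dist_L$-topology unchanged, which is stated without proof and is not a formal consequence of bi-Lipschitz equivalence alone (that only controls $\dist_L'$ up to an additive constant); your version sidesteps the metric-change issue entirely, at the cost of rerunning the argument of Theorem \ref{structural} rather than citing its statement.
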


\begin{proof}
 It is well known that Anosov diffeomorphisms have the shadowing property. 
 If the Riemannian metric is not adapted for $f$, we consider an adapted one. 
 \textcolor{black}{Note that all Riemannian metrics are Lipschitz equivalent.}
 In this way the Lipschitz topology on bi-Lipschitz homeomorphisms of $M$ does not change and 
 now the Riemannian metric $\dist_g$ is hyperbolic for $f$.
 Then, by Theorem \ref{structural} the corollary follows.
\end{proof}

\begin{rmk}
\label{rmkTakaki}
\textcolor{black}{
 In \cite{Ta74} a result similar to Corollary \ref{corAnosov} was obtained using a Lipschitz topology defined via local charts. 
 As we will see in Section \ref{secC1} our Lipschitz topology (induced by $\dist_W$) allows more perturbations. 
 Consequently, Corollary \ref{corAnosov} is stronger than the result in \cite{Ta74}.}
\end{rmk}

\begin{cor}
\label{corPseudoAnosov}
Pseudo-Anosov maps of compact surfaces are Lipschitz structurally stable with respect to 
a hyperbolic metric.
\end{cor}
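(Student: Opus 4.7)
The plan is to apply Theorem \ref{structural} to a pseudo-Anosov homeomorphism $f\colon M\to M$ of a compact surface. That theorem requires two inputs: an $f$-hyperbolic metric on $M$, and either persistence or the weak shadowing property. So the whole task reduces to exhibiting a hyperbolic metric and quoting a persistence result.

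For the hyperbolic metric, I would either take the one produced by Fathi (Remark \ref{fathiMetric}), using the classical fact from Thurston's theory that pseudo-Anosov maps are expansive, or work directly with the flat Riemannian metric with conical singularities built from the pair of transverse measured foliations $(\mathcal F^s,\mu_s)$ and $(\mathcal F^u,\mu_u)$, as in \cite{MaSm93,Je96}. Away from singular points, $f$ acts in local measure coordinates as $(u,s)\mapsto(\lambda u,\lambda^{-1}s)$ with dilatation $\lambda>1$, and a direct computation gives $\max\{\dist(f(x),f(y)),\dist(f^{-1}(x),f^{-1}(y))\}\geq c\,\dist(x,y)$ with $c>1$ on sufficiently close pairs $x,y$, which is the hyperbolicity condition in the sense of Section \ref{secHypMet}.

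For persistence, I would invoke the result of Lewowicz in \cite{Le83}, which states exactly that every pseudo-Anosov homeomorphism is persistent in the $C^0$ sense used in the present paper. With the hyperbolic metric and persistence in hand, Theorem \ref{structural} applies and yields the Lipschitz structural stability of $f$ with respect to this hyperbolic metric.

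The main obstacle, if one insists on the concrete flat metric rather than Fathi's abstract one, is verifying the hyperbolicity estimate uniformly across the finitely many $k$-pronged conical singularities: there, the stable and unstable prongs meet with total angle $k\pi$ and no single $(u,s)$-chart covers a neighbourhood. I expect the cleanest route is to pass to the natural $k$-fold branched cover at each singular point, where the two foliations become smooth and the regular-point computation above applies verbatim; an even cleaner alternative is simply to use Fathi's metric and defer the geometric pseudo-Anosov model to the separate singularity-invariance statement (Theorem \ref{teoSingPA}).
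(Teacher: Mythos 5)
Your proposal is correct and matches the paper's argument: the paper's proof likewise consists of citing Lewowicz \cite{Le83} for persistence of pseudo-Anosov maps and applying Theorem \ref{structural} with a hyperbolic metric (the paper takes the existence of such a metric for granted, via Remark \ref{metricPA} or Fathi's theorem, exactly as you suggest). Your extra discussion of verifying hyperbolicity of the flat conical metric near singularities is more detail than the paper gives, but it does not change the route.
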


\begin{proof}
 In \cite{Le83} it is shown that Pseudo-Anosov maps are persistent. 
 Therefore we conclude by Theorem \ref{structural}.
\end{proof}

\begin{rmk}
\label{metricPA}
For a pseudo-Anosov map $f$ of a compact surface $S$ denote by $\sing(f)$ the set of singular points of the stable foliation 
of $f$. 
Consider a flat metric in $S\setminus\sing(f)$ such that stable and unstable leaves are geodesics. 
This Riemannian metric induces a $f$-hyperbolic distance in $S$.
The construction is standard and the details can be found for example \textcolor{black}{in \cite[Section 3.1]{Je96}.} 
\textcolor{black}{An important property of this metric, for our purpose, is that the length of the boundary of 
a small ball of radius $r$ centered at 
a singular point $p$ equals $n\pi r$, where $n$ is the number of stable prongs of $p$.}
\end{rmk}

\textcolor{black}{In \cite[Theorem 3.5]{Le83}} Lewowicz obtained a result similar to Corollary \ref{corPseudoAnosov} but in the $C^1$ category. 
In his hypothesis it is required that the perturbed map coincides with the original one at singular points.
\textcolor{black}{In the Lipschitz setting, with a hyperbolic metric, this hypothesis is not needed as the next result shows.}

\begin{teo}
\label{teoSingPA}
 Assume that $f\colon S\to S$ is a pseudo-Anosov homeomorphism of a compact surface 
 with a hyperbolic metric as in Remark \ref{metricPA}.
 Then there is $\epsilon>0$ such that if $\dist_L(f,g)<\epsilon$ then $\sing(g)=\sing(f)$.
\end{teo}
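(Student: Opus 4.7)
The plan is to exploit the rigidity of flat cone singularities of the metric under bi-Lipschitz self-maps of small distortion. From $\dist_L(f,g)<\epsilon$ the definition of $\dist'_L$ yields $e^{-\epsilon}\dist(f(x),f(y))\leq \dist(g(x),g(y))\leq e^\epsilon\dist(f(x),f(y))$ for all $x\neq y$. Setting $\phi=g\circ f^{-1}$ and substituting $a=f(x)$, $b=f(y)$, this rewrites as $e^{-\epsilon}\dist(a,b)\leq\dist(\phi(a),\phi(b))\leq e^\epsilon\dist(a,b)$, so $\phi$ is an $e^\epsilon$-bi-Lipschitz self-homeomorphism of $(S,\dist)$. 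Because $f$ permutes $\sing(f)$ and $g=\phi\circ f$, it suffices to show $\phi(\sing(f))=\sing(f)$: the set $\sing(f)$ is intrinsic to the metric as its cone-locus, so this equality is exactly the content of $\sing(g)=\sing(f)$.

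The heart of the argument is a local area comparison. For any $p\in S$ write $\alpha(p)$ for the total cone angle of $\dist$ at $p$: $\alpha(p)=n(p)\pi$ if $p\in\sing(f)$ has $n(p)$ prongs (with $n(p)\neq 2$), and $\alpha(p)=2\pi$ at every regular point. Integrating the boundary-length formula of Remark \ref{metricPA} yields $\mathrm{Area}(B(p,r))=\tfrac12\alpha(p)r^2$ as soon as $r$ is smaller than the distance from $p$ to any other cone point. Fix $p\in\sing(f)$, set $q=\phi(p)$, and take $r>0$ so small (uniformly in $p$, since $\sing(f)$ is finite) that both $B(p,r)$ and $B(q,e^\epsilon r)$ contain no cone point other than $p$ and $q$ respectively. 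The $e^\epsilon$-bi-Lipschitz estimate gives
\[
 B(q,e^{-\epsilon}r)\;\subseteq\;\phi(B(p,r))\;\subseteq\;B(q,e^\epsilon r),
\]
together with the standard bi-Lipschitz distortion bound for the intrinsic $2$-dimensional Hausdorff measure on $(S,\dist)$: $e^{-2\epsilon}\mathrm{Area}(U)\leq\mathrm{Area}(\phi(U))\leq e^{2\epsilon}\mathrm{Area}(U)$.

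Applying both estimates to $\mathrm{Area}(\phi(B(p,r)))$, the inclusion bounds it in $[\tfrac12\alpha(q)e^{-2\epsilon}r^2,\,\tfrac12\alpha(q)e^{2\epsilon}r^2]$, while the distortion bound puts it in $[\tfrac12\alpha(p)e^{-2\epsilon}r^2,\,\tfrac12\alpha(p)e^{2\epsilon}r^2]$. Hence $\alpha(q)/\alpha(p)\in[e^{-4\epsilon},e^{4\epsilon}]$. Since $\alpha$ takes values in the discrete set $\pi\N$, choosing $\epsilon$ so small that $e^{4\epsilon}<(n(p)+1)/n(p)$ for every $p\in\sing(f)$ (possible because $\sing(f)$ is finite) forces $\alpha(q)=\alpha(p)$, and in particular $q\in\sing(f)$. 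Thus $\phi(\sing(f))\subseteq\sing(f)$, and bijectivity of $\phi$ together with finiteness of $\sing(f)$ upgrade this inclusion to an equality, completing the proof.

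The main obstacle is the quantitative matching between the continuous distortion factor $e^{4\epsilon}$ and the discrete ladder of admissible cone-angle ratios; the finiteness of $\sing(f)$ is what makes the required $\epsilon$ uniform. A minor side point to verify is the bi-Lipschitz distortion bound for the $2$-dimensional Hausdorff measure on the singular flat surface $(S,\dist)$, but this is the standard distortion estimate for Hausdorff measure applied in any metric space.
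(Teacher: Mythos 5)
Your central geometric step is sound and genuinely different from the paper's: you compare areas of small balls, using the two\-/sided Hausdorff\-/measure distortion of the $e^{\epsilon}$-bi-Lipschitz map $\phi=g\circ f^{-1}$ together with the discreteness of cone angles, whereas the paper compares lengths of small circles around a putative image of a regular point via an isoperimetric-type estimate and reaches the contradiction $\lips(j)\lips(j^{-1})\geq 3/2$ for $j=f\circ g^{-1}$. Both routes correctly yield that the cone locus is preserved, i.e.\ $\phi(\sing(f))=\sing(f)$ and hence $g(\sing(f))=\sing(f)$; your area/cone-angle version is arguably cleaner and gives the sharper conclusion $\alpha(\phi(p))=\alpha(p)$. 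One small inaccuracy: $r$ cannot be chosen ``uniformly in $p$'' so that $B(q,e^{\epsilon}r)$ misses the cone locus, since that condition depends on where $q=\phi(p)$ lands (e.g.\ $q$ could be a regular point very near a cone point); this is harmless because $r$ may be chosen after $g$, $p$ and $q$ are fixed, for instance inside a proof by contradiction assuming $q\notin\sing(f)$.

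There is, however, a genuine gap at your opening reduction. You assert that $\phi(\sing(f))=\sing(f)$ ``is exactly the content of $\sing(g)=\sing(f)$'' because $\sing(f)$ is intrinsic to the metric as its cone locus. This conflates two different objects: $\sing(f)$ is indeed the cone locus of the auxiliary metric, but $\sing(g)$, following the definition in Remark \ref{metricPA}, is the set of singular points of the stable foliation of $g$ --- a dynamical object attached to $g$ (concretely, $\sing(g)=h^{-1}(\sing(f))$ for the conjugacy $h$ provided by Corollary \ref{corPseudoAnosov}), not the cone locus of a metric adapted to $f$. Showing that $g$ permutes $\sing(f)$ proves that $\sing(f)$ is a $g$-invariant finite set; it does not by itself show that the singularities of $g$'s own invariant foliations sit exactly at these points. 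The paper closes precisely this gap in its final paragraph: $\sing(f)$ and $\sing(g)$ are finite $g$-invariant sets, they are close when $\epsilon$ is small (because $h$ is $C^{0}$-close to the identity), and expansiveness of $g$ then forces $\sing(g)=\sing(f)$. Your argument needs this (or an equivalent) additional step; as written it establishes the key lemma $g(\sing(f))=\sing(f)$ but not the stated conclusion.
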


\begin{proof}
If $f$ and $g$ are Lipschitz close then $j=f\circ g^{-1}$ and $j^{-1}$ are Lipschitz close to the identity of $S$.
Then, we can take $\epsilon>0$ such that if $\dist_L(f,g)<\epsilon$ then 
\begin{equation}
\label{eculipsPA}
 \lips(j)\lips(j^{-1})<3/2.
\end{equation}
Let us first show that if $\dist_L(f,g)<\epsilon$ then $j(\sing(f))=\sing(f)$.
Assume by contradiction that there is $x\notin\sing(f)$ but $j(x)\in\sing(f)$
with $\dist_L(f,g)<\epsilon$. 
Consider $C_1$ a small circle of radius $r_1$ and centered at $x$ such that $C_1\cap j(C_1)=\emptyset$. 
Let $r_2=\min_{y\in j(C_1)}\dist(j(x),y)$ and denote by $C_2$ the circle centered at $j(x)$ with radius $r_2$ as shown in 
Figure \ref{figSing}.	
\begin{figure}[h]
\begin{center}
 \includegraphics{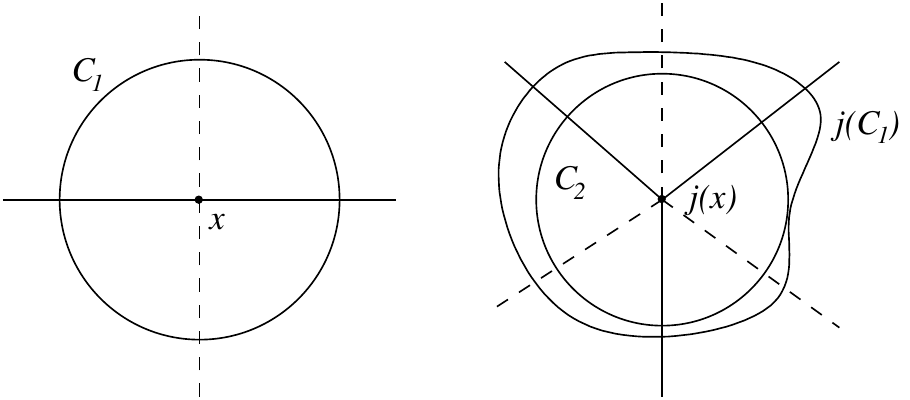}
 \caption{A small Lipschitz perturbation cannot move a singular point.}
\label{figSing}
\end{center}
\end{figure}
Since $x$ is a regular point \textcolor{black}{(of the metric)}
and the metric is flat we have 
that $\length(C_1)=2\pi r_1$, where $\length$ denotes the length of the curve.
Since $j(x)$ is a singular point we have that 
\begin{equation}
\label{ecucompa}
 \length(C_2)\geq 3\pi r_2
\end{equation}
because a neighborhood of $j(x)$ is constructed by gluing at least 3 
flat half-planes \textcolor{black}{as explained in Remark \ref{metricPA}.}
We also have that 
\begin{equation}
 \label{coso2}
 \length(j(C_1))\leq \lips(j)\length(C_1)=\lips(j) 2\pi r_1.
\end{equation}
By the isoperimetric inequality we know that $\length(j(C_1))\geq\length(C_2)$. 
This and (\ref{ecucompa}) implies that 
\begin{equation}
 \label{ecucoso}
\length(j(C_1))\geq 3\pi r_2.
\end{equation}
Finally, by (\ref{coso2}) and (\ref{ecucoso}) we have $3r_2\leq \lips(j) 2r_1$. 
Now take $z\in C_1$ such that $r_2=\dist(j(x),j(z))$. 
Since $z\in C_1$ we have that $\dist(x,z)=r_1$. 
Then 
\[
 \lips(j^{-1})\geq \frac{\dist(x,z)}{\dist(j(x),j(z))}=r_1/r_2.
\]
But, since $r_1/r_2\geq \frac3{2\lips(j)}$ we have that 
$\lips(j)\lips(\textcolor{black}{j}^{-1})\geq 3/2$. This \textcolor{black}{contradicts inequality (\ref{eculipsPA}) and
proves that $j(\sing(f))=\sing(f)$}.

Since $j=fg^{-1}$ and $\sing(f)$ is $f$-invariant we have that $g(\sing(f))=\sing(f)$. 
To conclude the proof we have to note that $\sing(f)$ and $\sing(g)$ are $g$-invariant finite sets and they are close if $\epsilon$ 
is small. Therefore, since $g$ is expansive, we conclude that $\sing(f)=\sing(g)$ as we wanted to prove.
\end{proof}

\section{Topologies on the space of smooth functions}
\label{secC1}

In \cite[page 238]{Wa78} Walters considered $\dist_W$ as ``analogous to a $C^1$ metric''. 
In this section we will investigate the relationship between the $C^1$ topology 
and two different Lipschitz topologies. 
In $\R^n$ we know that every $C^1$ diffeomorphisms is Lipschitz, 
therefore, we can restrict a Lipschitz topology to the space of $C^1$ diffeomorphisms. 
A natural question is: if two diffeomorphisms are close in a Lipschitz metric are they necessarily $C^1$ close? 
This is the problem that will be considered in this section for $\dist_W$, defined at (\ref{walterTopo}) above, 
and the \emph{natural} Lipschitz metric induced by a vector space structure that will be defined below. 
In general, the convergence in the $C^1$ topology implies the convergence in the Lipschitz topology, if we consider 
smooth manifolds with Riemannian metrics. 

For the first result denote by $I=[0,1]$ the unit interval with the usual metric $\dist(x,y)=|y-x|$ for all 
$x,y\in I$. Denote by $\Diff^1(I)$ the group of $C^1$ diffeomorphisms of $I$.
The $C^1$ topology in the one-dimensional case is defined by the distance $\dist_{C^0}(f,g)+\dist_{C^0}(f',g')$ 
for $f,g\in\Diff^1(I)$.
We refer the reader to \cite{Ro} for the definition of the $C^1$ topology on general manifolds. 

\begin{prop}
In $\Diff^1(I)$ the metric $\dist_W$ 
defines the $C^1$ topology. 
\end{prop}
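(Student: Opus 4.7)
The plan is to express $\dist_W(f,g)$ explicitly in a $C^0$-neighborhood of a fixed $f \in \Diff^1(I)$ and observe that the resulting expression is manifestly equivalent to the $C^1$ distance. Without loss of generality I assume $f$ is increasing, the decreasing case being symmetric. If $g \in \Diff^1(I)$ satisfies $\|f-g\|_\infty < 1/2$, then $g(0)<g(1)$, and since $g$ is monotone, $g$ is increasing as well. A similar observation will apply to $g^{-1}$ close to $f^{-1}$.

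The key simplification is this: when both $f$ and $g$ are increasing on $I$ and $x>y$,
\[
\bigl|\,|f(x)-f(y)|-|g(x)-g(y)|\,\bigr| = |(f-g)(x)-(f-g)(y)|,
\]
so $\dist_W'(f,g)$ is exactly the Lipschitz constant of the $C^1$ function $f-g$ on $[0,1]$. By the mean value theorem, this equals $\|f'-g'\|_\infty$. Applying the same reasoning to $f^{-1}$ and $g^{-1}$ (which are also increasing $C^1$ diffeomorphisms of $I$) gives $\dist_W'(f^{-1},g^{-1}) = \|(f^{-1})'-(g^{-1})'\|_\infty$. Consequently, throughout a $C^0$-neighborhood of $f$,
\[
\dist_W(f,g) = \|f-g\|_\infty + \|f^{-1}-g^{-1}\|_\infty + \|f'-g'\|_\infty + \|(f^{-1})'-(g^{-1})'\|_\infty.
\]

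With this formula the equivalence of topologies is nearly automatic. Since $\dist_W$ dominates $\|f-g\|_\infty + \|f'-g'\|_\infty$, convergence in $\dist_W$ forces $C^1$-convergence. Conversely, I invoke the standard fact that on the compact interval $I$, the map $f\mapsto f^{-1}$ is continuous in the $C^1$ topology: using $(g^{-1})'(y)=1/g'(g^{-1}(y))$ together with the uniform lower bound $|f'|\geq c>0$ (which propagates to $g$ by $C^0$-smallness of $f'-g'$), one concludes uniform convergence of both $g^{-1}$ and $(g^{-1})'$. Thus $C^1$-convergence of $g$ to $f$ implies $\dist_W(f,g)\to 0$.

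The main obstacle, such as it is, lies in that last step: one must quantify how a $C^1$-small perturbation of $f$ yields a $C^1$-small perturbation of the inverse. On a compact interval this follows from classical estimates exploiting the uniform lower bound on $|f'|$, so no essentially new idea is required.
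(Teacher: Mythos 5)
Your proof is correct and rests on the same key observation as the paper's: once $C^0$-closeness forces $f$ and $g$ to have the same orientation, the numerator defining $\dist_W'(f,g)$ is exactly the increment of $f-g$, so $\dist_W'$ controls (in your version, equals) $\|f'-g'\|_\infty$ by passing to difference quotients. You go a bit further than the paper by recording the exact identity $\dist_W'(f,g)=\|f'-g'\|_\infty$ and by spelling out the converse implication through $C^1$-continuity of inversion on $I$, a direction the paper treats as the standard one, but the approach is essentially the same.
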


\begin{proof}
If $\dist_{C^0}(f,g)<\epsilon<1/2$ then both diffeomorphisms have derivatives of the same sign. 
Suppose that $f$ and $g$ are increasing, the other case being similar. 
Assume that $\dist_W(f,g)\leq\epsilon$. 
Therefore 
$$\sup_{x<y}\frac{|f(y)-f(x)-g(y)+g(x)|}{y-x}\leq\epsilon.$$ 
Fixing $x$ and taking limit as $y\to x^+$ 
we have that $|f'(x)-g'(x)|\leq\epsilon$. Since this holds for all $x\in I$ we 
have that $f$ and $g$ are $C^1$-close.
\end{proof}

\begin{rmk}
 The argument of the previous proof applied for diffeomorphisms of a compact ball in $\R^n$ will gives us 
 $|\|d_pf(v)\|-\|d_pg(v)\||\leq\epsilon$. But this does not imply that $\|d_pf(v)-d_pg(v)\|\leq\epsilon$. 
\end{rmk}

Let us show that the previous proposition does not hold for 
higher dimensional manifolds. For $\epsilon>0$ given we will construct a diffeomorphism $f\colon D\to D$, 
where $D=\{(x,y)\in \R^2:x^2+y^2\leq 1\}$, so that $\dist'_W(f,id)<\epsilon$ and $d_{(0,0)}f$ is a rotation of angle $\pi$.
For this purpose we need the following preliminary result.
\begin{lem}
\label{lemGamma}
 For all $\epsilon>0$ there are a $C^\infty$ function $\gamma\colon[0,1]\to [0,\pi]$ and $0<a<b<\epsilon$
 such that: 
 \begin{enumerate}
  \item $\gamma(r)=\pi$ for all $r\in [0,a]$,
  \item $\gamma(r)=0$ for all $r\in [b,1]$,
  \item $0\leq |r\gamma'(r)|\leq \epsilon$ for all $r\geq 0$.
 \end{enumerate}
\end{lem}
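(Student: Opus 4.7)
The plan is to reduce condition (3) to an unweighted Lipschitz bound by changing variables to $u=\log r$, since $r\gamma'(r)=d\gamma/du$ under this substitution. Then the problem becomes: produce a smooth function of $u$ that drops from $\pi$ to $0$ over some finite window with derivative bounded by $\epsilon$ in absolute value, then transplant it back to the $r$-variable. A drop of $\pi$ with Lipschitz bound $\epsilon$ can only be achieved if the window has logarithmic length at least $\pi/\epsilon$, and this dictates the ratio $b/a$.

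First I would fix, once and for all (independent of $\epsilon$), an auxiliary $C^\infty$ template $\psi:\R\to[0,1]$ with $\psi\equiv 1$ on $(-\infty,0]$, $\psi\equiv 0$ on $[1,\infty)$, and $|\psi'|\le M$ for some constant $M$; such a function exists by standard mollification. All $\epsilon$-dependence will enter only through a rescaling of $\psi$, so that the uniform bound $M$ propagates cleanly.

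Next I would choose the parameters. Set $b=\min\{\epsilon/2,\,1/2\}$, let $L=M\pi/\epsilon$, and take $a=b\,e^{-L}$, so that $0<a<b<\epsilon$ and $\log(b/a)=L$. Define
\[
  \gamma(r)=\pi\,\psi\!\left(\frac{\log r-\log a}{L}\right) \quad \text{for } r\in(0,1],
\]
with $\gamma(0)=\pi$. For $r\in[0,a]$ the argument of $\psi$ is $\le 0$, so $\gamma\equiv\pi$, giving (1); for $r\in[b,1]$ it is $\ge 1$, so $\gamma\equiv 0$, giving (2). A direct differentiation yields $r\gamma'(r)=(\pi/L)\,\psi'\!\bigl((\log r-\log a)/L\bigr)$, hence $|r\gamma'(r)|\le M\pi/L=\epsilon$, giving (3). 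Smoothness on $(0,1]$ is automatic as a composition of smooth maps, smoothness at $r=0$ follows because $\gamma$ is constant on $[0,a]$, and smooth matching at $r=a$ and $r=b$ is ensured because $\psi$ is locally constant near $0$ and $1$, so every derivative vanishes there to infinite order.

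The main conceptual obstacle is the first step: recognizing that the weighted bound $|r\gamma'(r)|\le\epsilon$ is really a Lipschitz condition in logarithmic coordinates, and deducing from the required variation $\pi$ of $\gamma$ that one is forced to take $\log(b/a)$ comparable to $\pi/\epsilon$. Once this is understood, everything else is a standard cutoff-function construction, and the only care needed is to absorb the fixed constant $M$ coming from the template $\psi$ into the definition of $L$.
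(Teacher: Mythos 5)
Your proof is correct and rests on the same core idea as the paper's: the bound $|r\gamma'(r)|\le\epsilon$ is a Lipschitz condition in $\log r$, which the paper implements by following the exact profile $g(r)=k-\epsilon\log r$ (so that $rg'(r)=-\epsilon$) and then gluing with bump functions near the two corners. Your variant, composing a fixed smooth step $\psi$ with $\log r$ and absorbing its derivative bound $M$ into the window length $L=M\pi/\epsilon$, is a clean and fully explicit way to carry out the same construction, and it neatly sidesteps the corner-smoothing step the paper leaves as ``easy.''
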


\begin{proof}
 For $k\in\R$ consider the function $g\colon \R^+\to\R$ given by $g(r)=k-\epsilon\log(r)$. 
 Notice that $g$ satisfies the differential equation $rg'(r)=-\epsilon$ for all $r>0$.
 It is easy to see that if $k<0$ and $|k|$ is sufficiently large then there is $d\in (0,\epsilon)$ such that 
 $g(d)=0$. Fix this value of $k$ and take $c\in (0,d)$ so that $g(c)=\pi$. 
 \begin{figure}[h]
 \begin{center}
  \includegraphics{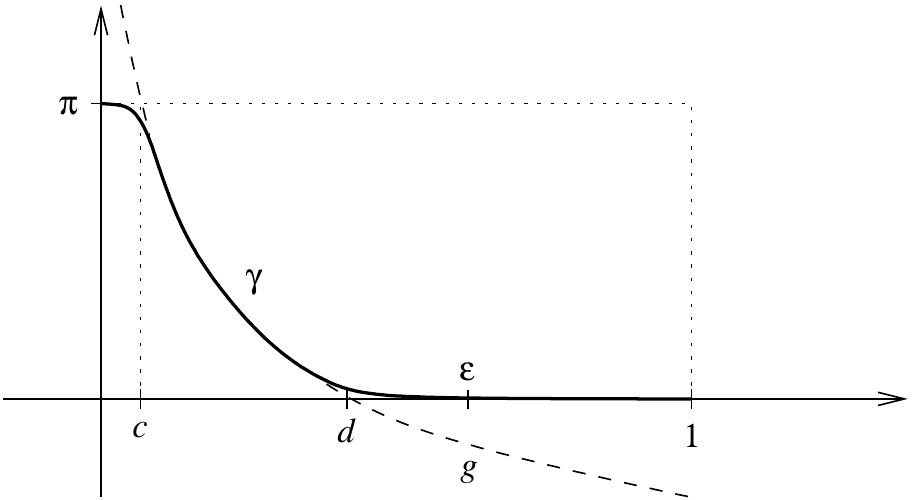}
  \caption{The function $\gamma$ of Lemma \ref{lemGamma}. In dotted line the function $g$.}
  \label{figGamma}
  \end{center}
 \end{figure}
 Now using bump functions near the points $c$ and $d$ it is easy to obtain the function $\gamma$. 
 In Figure \ref{figGamma} the construction is illustrated. 
 The numbers $a$ and $b$ are obtained in $(0,c)$ and $(d,\epsilon)$ respectively.
\end{proof}

Consider $\epsilon>0$ given and the function $\gamma$ given by Lemma \ref{lemGamma}. 
Define in polar coordinates the diffeomorphism $f\colon D\to D$ as 
\[
 f(r\cos\theta,r\sin\theta)=r(\cos(\theta+\gamma(r)),\sin(\theta+\gamma(r))).
\]
Denote by $R_\theta\colon\R^2\to\R^2$ the rotation of angle $\theta$ around the origin. 
Notice that $f(p)=R_\pi(p)$ if $\|p\|\leq a$ and $f(p)=p$ if $\|p\|\geq b$. 
Also, $\|f(p)\|=\|p\|$ for all $p\in D$. 
Therefore, $f$ is $C^0$ close to the identity of $D$ if $\epsilon$ is small 
and for all $\epsilon$ the differential $d_{(0,0)}f=R_\pi$. 
Consequently $f$ is $C^1$ far from the identity. 
We will show that $f$ is close to the identity for the metric $\dist_W$. 
For this, we will first show that the differential of $f$ is close to a rotation at each $p\in D$. 

\begin{lem}
 For all $p\in D$ it holds that $\|d_pf-R_{\gamma(r)}\|\leq \epsilon$ where $r=\|p\|$.
\end{lem}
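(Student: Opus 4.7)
The plan is to compute the differential of $f$ explicitly in polar coordinates and then subtract the rotation $R_{\gamma(r)}$ to see that the remaining operator has rank one with norm exactly $|r\gamma'(r)|$, which is bounded by $\epsilon$ thanks to item (3) of Lemma \ref{lemGamma}.

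First I would fix a point $p\neq 0$ with polar coordinates $(r,\theta)$ and work in the orthonormal frame
$e_r(\theta)=(\cos\theta,\sin\theta)$, $e_\theta(\theta)=(-\sin\theta,\cos\theta)$ at the domain side and the analogous frame at the image side $(r,\theta+\gamma(r))$. Let $\phi(r,\theta)=r(\cos\theta,\sin\theta)$ be the polar parametrization, so $\partial_r\phi=e_r(\theta)$ and $\partial_\theta\phi=r\,e_\theta(\theta)$. From the definition of $f$, $f\circ\phi(r,\theta)=\phi(r,\theta+\gamma(r))$, so the chain rule gives
\[
  d_pf\cdot e_r(\theta)=\partial_r(f\circ\phi)=e_r(\theta+\gamma(r))+r\gamma'(r)\,e_\theta(\theta+\gamma(r)),
\]
\[
  d_pf\cdot e_\theta(\theta)=\tfrac1r\partial_\theta(f\circ\phi)=e_\theta(\theta+\gamma(r)).
\]

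Next I would compare with $R_{\gamma(r)}$, which sends $e_r(\theta)$ to $e_r(\theta+\gamma(r))$ and $e_\theta(\theta)$ to $e_\theta(\theta+\gamma(r))$. Subtracting, the operator $A:=d_pf-R_{\gamma(r)}$ acts by
\[
  A\cdot e_r(\theta)=r\gamma'(r)\,e_\theta(\theta+\gamma(r)),\qquad A\cdot e_\theta(\theta)=0.
\]
Thus $A$ is rank one and, expressed in the two orthonormal bases, has the single nonzero entry $r\gamma'(r)$; its operator norm is $|r\gamma'(r)|$, which by item (3) of Lemma \ref{lemGamma} is at most $\epsilon$.

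Finally I would treat the origin separately: item (1) of Lemma \ref{lemGamma} gives $\gamma\equiv\pi$ on $[0,a]$, so $f$ coincides with $R_\pi$ on the ball of radius $a$, hence $d_0f=R_\pi=R_{\gamma(0)}$ and the inequality is trivial there. The only potential snag is bookkeeping between the two moving frames at $p$ and $f(p)$, but once those are distinguished the calculation is immediate; the bound on $|r\gamma'(r)|$ is exactly what the auxiliary lemma was designed to provide.
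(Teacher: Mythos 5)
Your proof is correct and is essentially the paper's argument: both compute $d_pf$ explicitly, subtract $R_{\gamma(r)}$, and bound the difference by $|r\gamma'(r)|\leq\epsilon$ from Lemma \ref{lemGamma}(3). The only difference is presentational — the paper uses rotational symmetry to reduce to $p=(r,0)$ and writes the matrix in the standard basis, which is exactly your polar-frame computation at $\theta=0$, while you also spell out the (trivial) case of the origin.
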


\begin{proof}
For $\|p\|< a$ or $\|p\|> b$ we know that $d_pf=R_{\gamma(r)}$.
By the symmetry of the construction, it is no loss of generality to consider $p=(r,0)$ with $r>0$. 
 In the standard basis of $\R^2$ the differential of $f$ is given by the matrix 
 \[
  d_pf=\left(
  \begin{array}{l|l}
   \cos(\gamma(r))-r\gamma'(r)\sin(\gamma(r)) & -\sin(\gamma(r))\\
   \sin(\gamma(r))+r\gamma'(r)\cos(\gamma(r)) & \cos(\gamma(r))
  \end{array}
  \right)
 \]
and 
\[
 d_pf-R_{\gamma(r)}=\left(
  \begin{array}{l|l}
   -r\gamma'(r)\sin(\gamma(r)) & 0\\
    r\gamma'(r)\cos(\gamma(r)) & 0
  \end{array}
  \right).
\]
By construction we have that $|r\gamma'(r)|\leq\epsilon$ and then $\|d_pf-R_{\gamma(r)}\|\leq\epsilon$.
\end{proof}

\begin{prop}
 For the diffeomorphism $f$ it holds that $\dist'_W(f,id_D)\leq \epsilon$. 
\end{prop}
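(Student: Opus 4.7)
The plan is to show that $f$ is an approximate isometry on all of $D$, namely
\[
  \frac{|y-x|}{1+\epsilon}\leq |f(y)-f(x)|\leq (1+\epsilon)|y-x|
\]
for every $x,y\in D$ with $x\neq y$. Since $\id_D$ acts as the identity on distances, this immediately yields
\[
  \frac{\bigl||f(y)-f(x)|-|y-x|\bigr|}{|y-x|}\leq \max\Bigl\{\epsilon,\tfrac{\epsilon}{1+\epsilon}\Bigr\}=\epsilon,
\]
which is exactly the desired bound $\dist'_W(f,\id_D)\leq\epsilon$.

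For the upper estimate, I would connect $x$ and $y$ by the straight segment $c(t)=x+t(y-x)$, $t\in[0,1]$, and apply the fundamental theorem of calculus to write
\[
  f(y)-f(x)=\int_0^1 d_{c(t)}f\cdot(y-x)\,dt.
\]
The preceding lemma decomposes $d_{c(t)}f=R_{\gamma(r(t))}+E(t)$ with $\|E(t)\|\leq\epsilon$, where $r(t)=\|c(t)\|$. Because each rotation $R_{\gamma(r(t))}$ is an isometry of $\R^2$, the triangle inequality under the integral gives $|f(y)-f(x)|\leq (1+\epsilon)|y-x|$.

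For the lower estimate, I would observe that $f^{-1}$ is built from the same recipe with $\gamma$ replaced by $-\gamma$, explicitly
\[
  f^{-1}(r\cos\theta,r\sin\theta)=(r\cos(\theta-\gamma(r)),r\sin(\theta-\gamma(r))).
\]
Since $|r(-\gamma)'(r)|=|r\gamma'(r)|\leq\epsilon$, the previous lemma applied to $f^{-1}$ yields $\|d_p f^{-1}-R_{-\gamma(r)}\|\leq\epsilon$, and the upper-bound argument now gives $|f^{-1}(u)-f^{-1}(v)|\leq(1+\epsilon)|u-v|$ for all $u,v\in D$. Substituting $u=f(y)$, $v=f(x)$ produces $|y-x|\leq(1+\epsilon)|f(y)-f(x)|$, which is the required lower bound.

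The potential obstacle is that the angle $\gamma(r(t))$ varies along the segment $c$, so the integrated linear operator $\int_0^1 R_{\gamma(r(t))}\,dt$ is in general not a rotation and its operator norm could drop well below $1$; this prevents a direct one-sided control of $|f(y)-f(x)|$ from below using the same segment integral. The trick is to absorb this difficulty by applying the symmetric argument to $f^{-1}$ (which has the same structure as $f$), so the lower bound for $f$ follows from the upper bound for $f^{-1}$ via inversion.
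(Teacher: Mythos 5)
Your proposal is correct and follows essentially the same route as the paper: the upper bound comes from integrating $d_{c(t)}f$ along the straight segment and using $\|d_pf-R_{\gamma(r)}\|\leq\epsilon$, and the lower bound is obtained by applying the identical estimate to $f^{-1}$, which has the same form with $\gamma$ replaced by $-\gamma$. The final passage from the two-sided bound $\frac{1}{1+\epsilon}\leq\frac{\|f(q)-f(p)\|}{\|q-p\|}\leq 1+\epsilon$ to $\dist'_W(f,\id_D)\leq\epsilon$ is also the same as in the paper.
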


\begin{proof}
 Given $p\neq q$ in $D$ consider the line $\alpha(t)=p+t(q-p)$ for $0\leq t\leq 1$. 
 Then 
 \[
 \begin{array}{ll}
  \|f(q)-f(p)\| & \leq \int_0^1 \left\|\frac{d}{dt}f(\alpha(t))\right\| dt\\
  & \leq \int_0^1 (1+\epsilon)\|\frac{d}{dt}\alpha(t)\|dt\\
  & \leq (1+\epsilon)\|q-p\|.
 \end{array}
 \]
Notice that 
$$f^{-1}(r\cos\theta,r\sin\theta)=r(\cos(\theta-\gamma(r)),\sin(\theta-\gamma(r))).$$
Therefore similar estimates gives us 
$$\|q-p\| \leq (1+\epsilon)\|f(q)-f(p)\|$$
and 
\[
 \frac{1}{1+\epsilon}\leq \frac{\|f(q)-f(p)\|}{\|q-p\|}\leq 1+\epsilon
\]
This implies that $\dist'_W(f,id_D)\leq\epsilon$. 
\end{proof}

Therefore we have proved:

\begin{teo}
There is a sequence of $C^1$ diffeomorphisms $f_n$ of the compact disk $D\subset \R^2$ 
do not converging to the identity in the $C^1$ topology but satisfying that $\dist_W(f_n,id_D)\to 0$ as $n\to \infty$.
\end{teo}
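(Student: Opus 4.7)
The plan is to assemble the sequence directly from the construction developed in the preceding lemmas and propositions of the section. For each $n \geq 1$, I would apply Lemma \ref{lemGamma} with $\epsilon = 1/n$ to obtain a function $\gamma_n \colon [0,1] \to [0,\pi]$ together with constants $0 < a_n < b_n < 1/n$, and then define $f_n \colon D \to D$ in polar coordinates by
\[
 f_n(r\cos\theta, r\sin\theta) = r(\cos(\theta + \gamma_n(r)), \sin(\theta + \gamma_n(r))).
\]

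The first step is to argue that no subsequence of $(f_n)$ can converge to $id_D$ in the $C^1$ topology. For every $n$, by construction $\gamma_n(r) = \pi$ on $[0,a_n]$, so $f_n$ coincides with $R_\pi$ on a neighbourhood of the origin and therefore $d_0 f_n = R_\pi$. Consequently $\|d_0 f_n - d_0 id_D\|$ is the uniform constant $\|R_\pi - I\| = 2$, so the derivatives cannot converge uniformly to the identity.

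The second step is to verify that $\dist_W(f_n, id_D) \to 0$, which means checking separately each of the three summands in definition (\ref{walterTopo}). For the $C^0$ part, $f_n(p) = p$ whenever $\|p\| \geq b_n$ and $\|f_n(p) - p\| \leq 2\|p\| \leq 2b_n$ otherwise (using $\|f_n(p)\| = \|p\|$), so $\dist_{C^0}(f_n, id_D) \leq 2b_n < 2/n$. The bound $\dist'_W(f_n, id_D) \leq 1/n$ is exactly the content of the last proposition in the excerpt. For the remaining term $\dist'_W(f_n^{-1}, id_D)$, I would observe that $f_n^{-1}$ has the same rotational form as $f_n$ with $\gamma_n$ replaced by $-\gamma_n$, so the same preliminary lemma (which only uses the estimate $|r\gamma_n'(r)| \leq 1/n$, invariant under sign change) gives the analogous bound on the differential, and the integration-along-segments argument used for the proposition repeats verbatim to produce $\dist'_W(f_n^{-1}, id_D) \leq 1/n$.

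I do not expect a genuine obstacle here, since the substantive work was already done in the lemma and the two propositions immediately preceding the theorem; the proof is really an act of collection. The only point that requires a brief sentence of justification is the $C^0$ estimate, which does not appear explicitly in the excerpt but follows immediately from the support properties of $\gamma_n$ and the fact that $f_n$ preserves radii. With those three ingredients, taking $n \to \infty$ gives both the required convergence in $\dist_W$ and the failure of $C^1$ convergence.
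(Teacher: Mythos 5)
Your proposal is correct and follows essentially the same route as the paper: the theorem there is stated as a summary of the preceding construction, and you simply instantiate it with $\epsilon=1/n$, use $d_{(0,0)}f_n=R_\pi$ to rule out $C^1$ convergence, and collect the estimates (the $C^0$ bound and the bound for $\dist_W'(f_n^{-1},id_D)$, which the paper leaves implicit, are handled exactly as you describe). The only microscopic slip is that $\dist_{C^0}$ as defined in the paper also includes the term for the inverses, so your bound should read $\dist_{C^0}(f_n,id_D)\leq 4b_n$ rather than $2b_n$, which of course still tends to $0$.
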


Let us now consider another Lipschitz topology.
Suppose that $M\subset \R^n$ is a compact $C^1$ manifold without boundary. 
Denote by $\|\cdot\|$ the Euclidean norm in $\R^n$. 
For $f\colon M\to M$ a bi-Lipschitz homeomorphisms define
\[
 \lips(f)=\sup_{x\neq y}\frac{\|f(x)-f(y)\|}{\|x-y\|}.
\]

\begin{teo}
 The distance 
$$\dist_{C^1}(f,g)=\dist_{C^0}(f,g)+\lips(f-g)$$
defines the $C^1$ topology of $C^1$ diffeomorphisms of $M\subset \R^n$.
\end{teo}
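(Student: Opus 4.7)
The plan is to show that a sequence $(f_n)$ of $C^1$ diffeomorphisms of $M$ converges to $f$ in the $C^1$ topology if and only if $\dist_{C^1}(f_n,f)\to 0$. Since the $C^0$ term is common to both metrics, the entire content reduces to comparing $\lips(f_n-f)$, where $h_n:=f_n-f$ is viewed as a map $M\to\R^n$, with the uniform norm $\sup_{p\in M}\|d_p f_n - d_p f\|$ of the difference of differentials.

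First I would prove the implication ``$C^1 \Rightarrow \dist_{C^1}$''. The geometric input I will use is that, since $M\subset\R^n$ is a compact $C^1$ submanifold, its intrinsic length distance $d_M$ is bi-Lipschitz equivalent to the ambient Euclidean one: there exists a constant $C>0$ such that $d_M(x,y)\leq C\|x-y\|$ for all $x,y\in M$. Granting this, for any $C^1$ map $h\colon M\to\R^n$ and any piecewise $C^1$ curve $\gamma$ in $M$ from $x$ to $y$, the mean value inequality gives $\|h(x)-h(y)\|\leq \mathrm{length}(\gamma)\cdot\sup_{p}\|d_p h\|$. Minimizing over $\gamma$ and applying the comparison, one obtains $\lips(h)\leq C\sup_{p}\|d_p h\|$. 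Applied to $h_n=f_n-f$ this yields $\lips(f_n-f)\leq C\sup_{p}\|d_pf_n-d_pf\|\to 0$.

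For the converse ``$\dist_{C^1}\Rightarrow C^1$'', fix $p\in M$ and a unit vector $v\in T_pM$, and pick a $C^1$ curve $\alpha\colon(-\epsilon,\epsilon)\to M$ with $\alpha(0)=p$ and $\alpha'(0)=v$. The definition of the Lipschitz constant gives $\|h_n(\alpha(t))-h_n(\alpha(0))\|\leq \lips(h_n)\,\|\alpha(t)-\alpha(0)\|$; dividing by $|t|$ and letting $t\to 0$ produces $\|d_pf_n(v)-d_pf(v)\|\leq \lips(h_n)\,\|v\|$, because $\|\alpha(t)-\alpha(0)\|/|t|\to\|v\|$. Taking the supremum over unit tangent vectors and then over $p\in M$ yields $\sup_{p}\|d_pf_n-d_pf\|\leq \lips(f_n-f)\to 0$, which together with $\dist_{C^0}(f_n,f)\to 0$ gives $C^1$-convergence.

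The step I expect to be the main technical obstacle is the bi-Lipschitz comparison of intrinsic and extrinsic distances on $M$ used in the first direction. This is a standard fact for compact $C^1$ submanifolds of Euclidean space, but it requires a local argument in charts (or via a tubular neighborhood) plus a compactness argument to glue the local Lipschitz constants into a single global $C$. Once this is in hand, the rest of the proof is essentially just the mean value inequality together with taking limits along tangent curves, so no further difficulty is expected.
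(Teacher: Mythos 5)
Your proof is correct, and it actually covers more than the paper does. For the direction the paper proves --- that $\dist_{C^1}$-convergence implies $C^1$-convergence --- your argument is essentially the same as the paper's: the paper passes to local charts and asserts $\|d_pf_n-d_pg\|\leq\lips(f_n-g)$, while you obtain the same inequality intrinsically, by differentiating $h_n=f_n-f$ along a curve $\alpha$ in $M$ with $\alpha'(0)=v$ and using $\|\alpha(t)-\alpha(0)\|/|t|\to\|v\|$; your version even sidesteps the (harmless but unaddressed) distortion introduced by charts. Where you genuinely go beyond the paper is the converse implication: the paper only remarks earlier in the section, without proof, that $C^1$-convergence implies convergence in the Lipschitz metrics, whereas you prove it and correctly isolate the real technical content, namely the chord--arc comparison $d_M(x,y)\leq C\|x-y\|$ for a compact $C^1$ submanifold of $\R^n$ (local graph representation with small slope, then compactness to get a uniform constant), after which the mean value inequality along curves gives $\lips(f_n-f)\leq C\,\sup_p\|d_pf_n-d_pf\|$. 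Two small points to tidy: (i) the chord--arc statement needs $M$ connected; for a compact manifold with several components the extrinsic distance between distinct components is bounded below, and on such pairs the ratio $\|h_n(x)-h_n(y)\|/\|x-y\|$ is already controlled by $\dist_{C^0}(f_n,f)$, so the estimate survives; (ii) since both topologies in question are metrizable, the sequential formulation you (and the paper) use does suffice to conclude equality of topologies.
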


\begin{proof}
 Suppose that $g,f_n\colon M\to M$ are $C^1$ diffeomorphisms, $n\in\N$,
 such that
 $\lim_{n\to\infty}\dist_{C^1}(f_n,g)= 0$.
 Taking local charts we can assume that $g,f_n\colon B_r\subset \R^k\to \R^k$ where $B_r$ is a compact ball and $k=\dim(M)$.
 Now it is easy to see that 
 \[ \|d_pf_n-d_pg\|\leq \lips(f_n-g)\]
 for all $p\in B_r$. Therefore $d_pf_n$ converges to $d_pg$ uniformly in $p\in B_r$.
\end{proof}

% \medskip
% Received xxxx 20xx; revised xxxx 20xx.
% \medskip


\begin{thebibliography}{99}

\bibitem{BO}
\newblock N. H. Bingham and A. J. Ostaszewski, 
\newblock \emph{Normed versus topological groups: Dichotomy and duality}, 
\newblock \textbf{472}, Dissertationes Mathematicae, (2010).

\bibitem{Fa89} (MR1027497)
\newblock A. Fathi,
\newblock \emph{Expansiveness, hyperbolicity and Hausdorff dimension},
\newblock Commun. Math. Phys., \textbf{126}, (1989), 249--262.

\bibitem{FuNa} (MR2118960)
\newblock K. Fukui and T. Nakamura,
\newblock \emph{A topological property of Lipschitz mappings},
\newblock Topology Appl., \textbf{148}, (2005), 143–-152.

\bibitem{Ha79} (MR0516473)
\newblock J. Harrison,
\newblock \emph{Unsmoothable diffeomorphisms on higher dimensional manifolds},
\newblock Proc. Amer. Math. Soc., \textbf{73}, (1979), 249--255.

\bibitem{Hi90} (MR1049828)
\newblock K. Hiraide, 
\newblock \emph{Expansive homeomorphisms of compact surfaces are pseudo-Anosov},
\newblock Osaka J. Math., \textbf{27}, (1990), 117--162.

\bibitem{Je96}
\newblock E. Jeandenans,
\newblock \emph{Diff\'eomorphismes hyperboliques des surfaces et combinatoire des partitions de Markov},
\newblock Thesis, (1996).

\bibitem{Le80} (MR0597800)
\newblock J. Lewowicz,
\newblock \emph{Lyapunov functions and topological stability},
\newblock J. Diff. Eq., \textbf{38}, (1980), 192--209.

\bibitem{Le83} (MR0753924)
\newblock J. Lewowicz,
\newblock \emph{Persistence in expansive systems},
\newblock Erg. Th. \& Dyn. Sys., \textbf{3}, (1983), 567--578.

\bibitem{Le89} (MR1129082)
\newblock J. Lewowicz,
\newblock \emph{Expansive homeomorphisms of surfaces},
\newblock Bol. Soc. Brasil. Mat. (N.S.), \textbf{20}, (1989), 113--133.

\bibitem{Ma75} (MR0650658)
\newblock R. Mañ\'e,
\newblock \emph{Expansive diffeomorphisms},
\newblock Lecture Notes in Math. \textbf{468}, Springer, Berlin, (1975), 162-174.

\bibitem{MaSm93} (MR1214233)
\newblock H. Masur and J. Smillie, 
\newblock \emph{Quadratic differentials with prescribed singularities and pseudo-Anosov diffeomorphisms},
\newblock Comment. Math. Helvetici, \textbf{68}, (1993), 289--307.

\bibitem{PiRoSa} (MR1952375)
\newblock S. Yu. Pilyugin, A. A. Rodionova and K. Sakai,
\newblock \emph{Orbital and weak shadowing properties},
\newblock Discrete and continuous dynamical systems, \textbf{9}, (2003), 287--308.

\bibitem{PiTi} (MR2683779)
\newblock S. Yu. Pilyugin and S. B. Tikhomirov, 
\newblock \emph{Lipschitz shadowing implies structural stability}, 
\newblock Nonlinearity, \textbf{23}, (2010), 2509-2515.

\bibitem{Ro}
\newblock C. Robinson,
\newblock \emph{Dynamical Systems},
\newblock CRC Press, 2$^{nd}$ edition, (1999).

\bibitem{SaWo89} (MR0974780)
\newblock K. Sakai and R. Y. Wong,
\newblock \emph{Conjugating homeomorphisms to uniform homeomorphisms},
\newblock Trans. Amer. Math. Soc., \textbf{311}, (1989), 337--356.

\bibitem{Ta74} (MR0346844)
\newblock K. Takaki,
\newblock \emph{Lipeomorphisms close to an Anosov diffeomorphism},
\newblock Nagoya Math. J., (1974), \textbf{53}, 71--82.

\bibitem{Wa78} (MR0518563)
\newblock P. Walters,
\newblock \emph{On the pseudo orbit tracing property and its relationship to stability},
\newblock Lecture Notes in Math., Springer, \textbf{668}, (1978), 231--244.

\bibitem{WW} (MR0295379)
\newblock F. W. Wilson,
\newblock \emph{Pasting diffeomorphisms of $R^n$},
\newblock Illinois J. Math., \textbf{16}, (1972), 222--233.

\end{thebibliography}
\end{document}